\theoremstyle{plain}
\newtheorem{mainthm}{\textsc{Theorem}}
\newtheorem{thm}{Theorem}[section]
\newtheorem{cor}[thm]{Corollary}	 
\newtheorem{maincor}{\textsc{Corollary}}
\newtheorem{lem}[thm]{Lemma}		
\newtheorem{prop}[thm]{Proposition}
\theoremstyle{definition}
\newtheorem{defn}[thm]{Definition}	
\newtheorem{ex}[thm]{Example}
\theoremstyle{remark}
\newtheorem{rem}[thm]{Remark}
\newtheorem{note}[thm]{Notation}
\numberwithin{equation}{section}
\newcommand{\R}{\mathbb{R}}
\newcommand{\Com}{\mathbb{C}}
\newcommand{\Q}{\mathbb{Q}}
\newcommand{\Z}{\mathbb{Z}}	
\newcommand{\N}{\mathbb{N}}	
\renewcommand{\P}{\mathscr{P}}
\newcommand{\Lagr}{L}
\newcommand{\norm}[1]{\left\| #1 \right\|}
\newcommand{\cfsa}{\mathscr{CF}^{sa}}
\newcommand{\im}{\mathrm{rge}\,}
\newcommand{\Mat}{\mathrm{Mat}\,}
\newcommand{\Sp}{\mathrm{Sp}\,}
\newcommand{\Sym}{\mathrm{Sym}\,}
\newcommand{\Lin}{\mathscr{L}}
\newcommand{\Id}{I}
\newcommand{\Sf}[1]{\mathrm{Sf\,}(#1)}
\newcommand{\trasp}[1]{{#1}^\mathsf{T}}
\DeclareMathOperator{\sgn}{sgn}
\DeclareMathOperator{\ind}{ind}
\DeclareMathOperator{\iCLM}{\iota^{\scriptscriptstyle{\mathrm{CLM}}}}
\DeclareMathOperator{\iMor}{m^{--}}
\DeclareMathOperator{\coiMor}{m^{+}}
\DeclareMathOperator{\igeo}{\iota}
\DeclareMathOperator{\dom}{dom}
\DeclareMathOperator{\codim}{codim}
\DeclareMathOperator{\sech}{sech}
\renewcommand{\=}{\coloneqq}
\begin{document}

\title{Morse index theorem for heteroclinic, homoclinic and halfclinic orbits of Lagrangian systems}
\author{%
Xijun Hu\thanks{Partially supported by  the Taishan Scholars Climbing Program of Shandong (TSPD20240802).} \and
Alessandro Portaluri\thanks{Partially supported by Progetto di Ricerca GNAMPA--INdAM, codice CUP-E55F22000270001
``Dinamica simbolica e soluzioni periodiche per problemi singolari della Meccanica Celeste'' and PRIN 2022–2025: ``Stability in Hamiltonian dynamics and beyond''} \and
Li Wu\thanks{Partially supported by NSFC (No.\ 12171281)} \and
Qin Xing\thanks{Partially supported by NSFC (No.\ 12201278) and by the Natural Science Foundation of Shandong Province (No.\ ZR2022QA076)}}
\maketitle

\begin{abstract}
We prove a new, more general version of the Morse index theorem for heteroclinic, homoclinic, and halfclinic solutions of general Lagrangian systems. In the final section we compute the Morse index for explicit heteroclinic and halfclinic solutions in classical mechanical models such as the mathematical pendulum, the Nagumo equation, and a four–dimensional competition–diffusion system.

\smallskip
\noindent
{\bf AMS Subject Classification:} 58J30, 53D12, 37C29, 37J45, 70K44, 58J20.\\[2pt]
{\bf Keywords:} Spectral flow, Maslov index, homoclinic orbits, heteroclinic orbits, halfclinic orbits, Nagumo equation, mathematical pendulum.
\end{abstract}


\section{Introduction, description of the problem and main results}\label{sec:introduction}

Morse index theory for Lagrangian systems relates the Morse index of a critical point of a Legendre–convex variational problem to the symplectic oscillatory behavior of the associated linearized equation at that point. The subject goes back to M.~Morse, who first expressed the index of a geodesic (as a critical point of the geodesic action) in terms of the total number of conjugate points, counted with multiplicity. This theory has since been extended by Edwards, Simons, and Smale to higher–order systems, minimal surfaces, and certain PDEs.

Classically, most results concern Hamiltonian orbits on compact time intervals. A breakthrough came with Chen and Hu \cite{CH07}, who initiated the study of unbounded trajectories. Their work stimulated many developments: Chardard, Bridges, and Dias extended the Maslov index framework to solitary waves and multi–pulse homoclinic orbits \cite{CB15,CDB09a,CDB09b,CDB11}. In 2008, Pejsachowicz \cite{Pej08} showed that homoclinic trajectories of nonautonomous vector fields on the circle bifurcate from a stationary solution when the asymptotic stable bundles $E^s(\pm\infty)$ have different twists. His proof reveals a deep link between the topology of these asymptotic bundles and the birth of homoclinic solutions, and forms an essential building block for index theory in unbounded settings.

Index–theoretic results for heteroclinic, homoclinic and halfclinic (h–clinic) 
trajectories appeared only later. Waterstraat obtained a spectral–flow formula 
for homoclinics \cite{Waa15}, and Hu–Portaluri developed an index theory for 
h–clinic solutions \cite{HP17}. Earlier, Jones–Marangell \cite{JM12} studied 
the stability of a travelling wave, in effect using a spectral–flow approach to 
a heteroclinic orbit.

Subsequent works 
\cite{HLS18,HS20,How21,How23,How25,Waa21} further explored the link between the 
Morse and Maslov indices and the oscillatory behaviour of solutions on the 
half–line. Below we briefly compare our results with these contributions.

In \cite{HLS17,HLS18}, the authors consider the eigenvalue problem
\[
Hy := -y^{\prime\prime} + V(x)y = \lambda y, \quad \text{dom}(H) = H^{2}(\mathbb{R}),
\]
where $\lambda\in\R$ and $V\in \mathscr C^0(\R;\R^{n\times n})$ is a symmetric matrix potential, under suitable integral constraints. Their main theorem \cite[Theorem~1.2]{HLS18} gives
\[
\operatorname{Mor}(H)
= -\operatorname{Mas}\bigl(E^u(\tau),E^s(+\infty);\tau\in\R\bigr).
\]
In the present paper, under significantly milder hypotheses (notably without integral conditions), we obtain the same conclusion in Corollary~\ref{thm:morse index=maslov index}, thereby covering a wider class of systems.

Later, Howard and Sukhtayev \cite{HS20} studied the Sturm–Liouville operator
\[
\mathcal{L}\phi = Q(x)^{-1} \Bigl( -\bigl(P(x)\phi^{\prime}\bigr)^{\prime} + V(x)\phi \Bigr),
\quad \phi(x) \in W^{2,2}([0,\infty),\Com^{n}),
\]
with Lagrangian boundary conditions and asymptotic assumptions on $P,V,Q$. Their main result \cite[Theorem~1.1]{HS20} is the identity
\begin{equation}\label{eq:how-suk} 
\operatorname{Mor}(\mathcal{L}) = \operatorname{Mas}(\Lambda_0,E^{s}(\tau); \tau \in [0,\infty]) - \operatorname{Mas}(\Lambda_0,E_{\lambda}^{s}(+\infty); \lambda \in [-\lambda_{\infty},0])
\end{equation}
for sufficiently large $\lambda_\infty$. In Theorem~\ref{thm:morse index formula for halfclinic}, under weaker assumptions, we obtain a stronger statement by explicitly computing the second term on the right-hand side of \eqref{eq:how-suk} via the H\"ormander and triple indices.

In \cite{HP17}, the authors constructed an index theory for h–clinic motions of general Hamiltonian systems, and in \cite{BHPT19} they provided an ad hoc extension to certain asymptotic motions in weakly singular Lagrangian systems (including the gravitational $n$–body problem).

Starting from the spectral flow formula in \cite{HP17}, we construct here an index theory for h–clinic motions in the Lagrangian setting. The theory identifies the {\sc Morse index} of an h–clinic solution with a {\sc geometric index} defined via a Maslov–type index, up to an explicit correction term.

Finally, we apply our main results to compute the Morse index of specific heteroclinic and halfclinic solutions in the following classical models:
\begin{itemize}
\item the mathematical pendulum;
\item the Nagumo reaction equation for impulse propagation along a nerve fibre;
\item a reaction–diffusion system in $\R^4$.
\end{itemize}

\subsection{Description of the problem and main results}\label{subsec:main-results}

Let $T\R^n\cong \R^n \times \R^n$ denote the tangent bundle of $\R^n$; its elements are written $(q,v)$ with $q\in\R^n$ and $v\in T_q\R^n\cong\R^n$. Let
\[
L:\R\times T\R^n\to\R
\]
be a smooth nonautonomous Lagrangian satisfying the Legendre convexity condition
\begin{itemize}
\item[{\bf(L1)}] $L$ is $\mathscr C^2$–convex on the fibres of $T\R^n$, i.e.
\[
\norm{D^2_{vv} L(t,q,v)} \ge \ell_0 \Id>0 \qquad \forall (t,q,v)\in\R\times T\R^n.
\]
\end{itemize} 
We fix two rest points $u^-,u^+\in\R^n$ of the Lagrangian vector field $\nabla L$, i.e.
\[
\nabla L(t,u^\pm,0)=0 \quad \text{for all } t\in\R.
\]

\begin{defn}\label{def:hetero}
A \emph{heteroclinic orbit} $u$ \emph{from $u^-$ to $u^+$} is a $\mathscr C^2$–solution of
\begin{equation}\label{eq:u.l.e.}
\begin{cases}
\dfrac{d}{dt}\partial_v L\bigl(t,u(t),\dot u(t)\bigr)
 = \partial_q L\bigl(t,u(t),\dot u(t)\bigr), & t\in\R,\\[4pt]
\displaystyle\lim_{t\to -\infty} u(t)=u^-,\quad
\lim_{t\to +\infty} u(t)=u^+.
\end{cases}
\end{equation}	
If $u^-=u^+$, we call $u$ a \emph{homoclinic} orbit.
\end{defn}

\begin{defn}\label{def:hclinics}
Let $L_0\in \Lagr(n)$ be a Lagrangian subspace of $(\R^{2n},\omega)$.
A \emph{future halfclinic solution} $u$ starting at $L_0$ is a solution of
\begin{equation}\label{eq:half}
\begin{cases}
\dfrac{d}{dt}\partial_v L\bigl(t,u(t),\dot u(t)\bigr)
 = \partial_q L\bigl(t,u(t),\dot u(t)\bigr), & t\in\R^+,\\[4pt]
\trasp{\bigl(\partial_v L(0,u(0),\dot u(0)),u(0)\bigr)}\in L_0,\\[4pt]
\displaystyle\lim_{t\to +\infty} u(t)=u^+.
\end{cases}
\end{equation}	
A \emph{past halfclinic solution} $u$ starting at $L_0$ is defined analogously on $\R^-$, with
$\lim_{t\to -\infty}u(t)=u^-$.
\end{defn}

\begin{note}
Any solution of \eqref{eq:u.l.e.} or \eqref{eq:half} will be called an \emph{h–clinic orbit}. We write
$I:=\R$, $\R^+$, or $\R^-$ according to the case.
\end{note}

Linearizing the Euler–Lagrange equation along an h–clinic orbit $u$ and setting
\begin{align}\label{def:PQR}
P(t)\=\partial_{vv} L\bigl(t,u(t),\dot u(t)\bigr),\quad 
Q(t)\=\partial_{uv} L\bigl(t,u(t),\dot u(t)\bigr),\quad 
R(t)\=\partial_{uu} L\bigl(t,u(t),\dot u(t)\bigr),
\end{align}
we obtain the variational equation and the associated Sturm–Liouville operator
\begin{equation}\label{eq:Sturm-Liouville}
(\mathscr A\, w)(t)
:= -\dfrac{d}{dt}\bigl(P(t)\dot w(t)+ Q(t)w(t)\bigr)
   + \trasp{Q(t)}\,\dot w(t) + R(t)\, w(t),
\qquad t\in I.
\end{equation}
In the halfclinic case, the boundary condition at $t=0$ becomes
\[
\bigl(P(0)\dot w(0)+Q(0)w(0),\,w(0)\bigr)^{T}\in L_0.
\]
The functions $P$ and $R$ are symmetric matrix paths. We assume:

\begin{itemize}
\item[{\bf(L2)}]
$P(t),Q(t),R(t)$ converge as $t\to\pm\infty$ to matrices $P_\pm,Q_\pm,R_\pm$, respectively, and there are constants $C_1,C_2,C_3>0$ such that
\[
\norm{P(t)}\ge C_1,\quad
\norm{Q(t)}\le C_2,\quad
\norm{R(t)}\le C_3 \qquad \forall t\in I.
\]
\end{itemize}

Let
\[
z(t)=\begin{pmatrix} P(t)\dot{w}(t)+Q(t)w(t)\\ w(t)\end{pmatrix}.
\]
Then \eqref{eq:Sturm-Liouville} corresponds to the Hamiltonian system
\begin{equation}\label{eq:Ham-sys-family}
\dot z = J B(t)z,\qquad
B(t)\=\begin{pmatrix}
P^{-1}(t) & -P^{-1}(t)Q(t)\\
-\trasp{Q}(t)P^{-1}(t) & \trasp{Q}(t)P^{-1}(t)Q(t)-R(t)
\end{pmatrix},
\end{equation}
and we impose:

\begin{itemize}
\item[\bf(H1)] The limit matrices $JB(-\infty)$ and $JB(+\infty)$ are hyperbolic (their spectrum avoids the imaginary axis).
\item[\bf(H2)] The matrices
\(
\begin{pmatrix}
P_-&Q_-\\Q_-^T&R_-
\end{pmatrix}
\)
and
\(
\begin{pmatrix}
P_+&Q_+\\Q_+^T&R_+
\end{pmatrix}
\)
are both positive definite.
\end{itemize} 

Set
\[
E:=W^{2,2}(\R,\R^{n}),\qquad
E^\pm_{L_0}:=\Bigl\{w\in W^{2,2}(\R^\pm,\R^{n})\ \Big|\ 
\bigl(P(0)\dot w(0)+Q(0)w(0),\,w(0)\bigr)^T\in L_0\Bigr\}.
\]
Define
\[
\mathcal A := \mathscr A\big|_{E},
\qquad
\mathcal A_{L_0}^{\pm} := \mathscr A\big|_{E_{L_0}^{\pm}} .
\]
For the selfadjoint operator $\mathcal A$ arising from the second variation at a critical point $u$, we denote by
\[
\iMor(\mathcal A)
\]
its \emph{Morse index}, namely the dimension of the maximal subspace of $E$ on which the quadratic form associated with the second variation is negative definite. Equivalently, $\iMor(\mathcal A)$ equals the total multiplicity of the negative eigenvalues of $\mathcal A$. For h–clinic solutions we use the shorthand
\begin{itemize}
\item $u$ heteroclinic:
\[
\iMor(u):=\iMor(\mathcal A);
\]
\item $u$ future half–clinic with asymptotics $L_0$:
\[
\iMor(u,L_0,+):=\iMor(\mathcal A_{L_0}^+);
\]
\item $u$ past half–clinic with asymptotics $L_0$:
\[
\iMor(u,L_0,-):=\iMor(\mathcal A_{L_0}^-).
\]
\end{itemize}
Let $\gamma_\tau$ be the fundamental solution of
\begin{equation}\label{eq:Ham-sys-bvp-het-hom-intro-single}
\begin{cases}
\dot \gamma_\tau(t)=JB(t)\,\gamma_\tau(t), & t\in\R,\\
\gamma_\tau(\tau)=\Id,
\end{cases}
\end{equation}
and denote by $E^{s}(\tau)$ and $E^{u}(\tau)$ the associated stable and unstable Lagrangian subspaces. The asymptotic stable/unstable subspaces $E^{s}(+\infty)$ and $E^{u}(-\infty)$ are defined as the negative and positive spectral subspaces of $JB(+\infty)$ and $JB(-\infty)$, respectively. Under (H1),
\[
\lim_{\tau\to +\infty}E^{s}(\tau)=E^{s}(+\infty),
\qquad
\lim_{\tau\to -\infty}E^{u}(\tau)=E^{u}(-\infty)
\]
in the gap topology on the Lagrangian Grassmannian (see \cite{AM03}).

\begin{defn}[{\cite{HP17}}]\label{def:h-clinic index}
The \emph{geometrical index} of an h–clinic orbit $u$ is defined as follows:
\begin{itemize}
\item for a heteroclinic solution $u$ of \eqref{eq:u.l.e.},
\[
\igeo(u):=-\iCLM\bigl(E^{s}(\tau),E^{u}(-\tau);\tau\in\R^+\bigr);
\]
\item for a future halfclinic solution $u$ of \eqref{eq:half},
\[
\igeo^+_{L_0}(u):=-\iCLM\bigl(E^{s}(\tau),L_0;\tau\in\R^+\bigr);
\]
\item for a past halfclinic solution $u$ of \eqref{eq:half},
\[
\igeo^-_{L_0}(u):=-\iCLM\bigl(L_0,E^{u}(-\tau);\tau\in\R^+\bigr),
\]
\end{itemize}
where $\iCLM$ denotes the Cappell–Lee–Miller Maslov index (see Appendix~\ref{sec:maslov}).
\end{defn}

For three Lagrangian subspaces $L_1,L_2,L_3$ we denote by
\[
\iota(L_1,L_2,L_3)
\]
the \emph{triple index} (see Appendix~\ref{lem:defi:triple index} for the definition and basic properties). Let $L_D:=\R^n\times\{0\}$ be the Dirichlet Lagrangian. We can now state the main result for heteroclinic solutions.

\begin{mainthm}\label{thm:morse-index-formula}
Let $u$ be a heteroclinic solution and assume {\rm(L1)}, {\rm(L2)}, and {\rm(H1)}. Then
\begin{equation}\label{eq:Morse Maslov}
\iMor(u)=\igeo(u)+\iota\bigl(E^{u}(-\infty),E^{s}(+\infty);L_D\bigr).
\end{equation}
\end{mainthm}

\begin{maincor}\label{thm:morse index=maslov index}
Let $u$ be a heteroclinic solution and assume {\rm(L1)}, {\rm(L2)}, {\rm(H1)}, {\rm(H2)}. Then
\begin{equation}\label{eq:Morse Maslov-corollary}
\iMor(\mathcal A)=\igeo(u).
\end{equation}
\end{maincor}

Our next theorem is the analogue for past and future halfclinic solutions and general Lagrangian boundary conditions.

\begin{mainthm}\label{thm:morse index formula for halfclinic}
Let $u$ be a future or past halfclinic orbit. Assume {\rm(L1)}, {\rm(L2)}, and {\rm(H1)}. Then:
\begin{itemize}
\item[] (future)
\begin{equation}\label{eq:Morse Maslov+}
\iMor(u,L_0,+)=\igeo^+_{L_0}(u)+\iota\bigl(L_D,L_0;E^{s}(+\infty)\bigr);
\end{equation}
\item[] (past)
\begin{equation}\label{eq:Morse Maslov -}
\iMor(u,L_0,-)=\igeo^-_{L_0}(u)+\iota\bigl(E^{u}(-\infty),L_0;L_D\bigr).
\end{equation}
\end{itemize}
\end{mainthm}

As a direct consequence we obtain a comparison of Morse indices when the Lagrangian boundary condition is replaced by the Dirichlet one.

\begin{maincor}\label{cor:morse index between general boundary and Dirichlet boundary}
Let $u$ be a future or past halfclinic orbit and assume {\rm(L1)}, {\rm(L2)}, {\rm(H1)}, {\rm(H2)}. Then:
\begin{itemize}
\item[] (future)
\begin{equation}\label{eq:+morse index between general boundary and Dirichlet boundary}
\iMor(u,L_0,+)-\iMor(u,L_D,+)=\iota\bigl(L_D,L_0;E^s(0)\bigr);
\end{equation}
\item[] (past)
\begin{equation}\label{eq:-morse index between general boundary and Dirichlet boundary}
\iMor(u,L_0,-)-\iMor(u,L_D,-)=\iota\bigl(E^u(0),L_0;L_D\bigr).
\end{equation}
\end{itemize}
\end{maincor}

\begin{ex}[The scalar case]\label{ex:scalar}
Consider the scalar Sturm–Liouville operator $\mathscr A_\lambda$ defined by \eqref{eq:Sturm-Liouville} with $n=1$ and $R$ replaced by $R_\lambda:=R+\lambda$. From Section~\ref{sec:fredholm-sturm} we know that $\mathscr A_\lambda$ is Fredholm if and only if $JB_\lambda(\pm\infty)$ are hyperbolic, which in this scalar case is equivalent to $R_\pm>0$.

Since $P,Q,R$ are real scalar functions, evaluating at $+\infty$ gives
\[
\det\bigl(\mu-JB_\lambda(+\infty)\bigr)
=\mu^2-P_+^{-1}(R_++\lambda).
\]
Thus the eigenvalues of $JB_\lambda(+\infty)$ are $\pm\sqrt{P_+^{-1}(R_++\lambda)}$, and
\[
JB_\lambda(+\infty)
\begin{pmatrix}
Q_+\pm\sqrt{P_+(R_++\lambda)}\\[2pt] 1
\end{pmatrix}
=\pm \sqrt{P_+^{-1}(R_++\lambda)}
\begin{pmatrix}
Q_+\pm\sqrt{P_+(R_++\lambda)}\\[2pt] 1
\end{pmatrix}.
\]
Hence
\[
E^s_\lambda(+\infty)
=V^{-}\bigl(JB_\lambda(+\infty)\bigr)
=\operatorname{span}\left\{
\begin{pmatrix}
Q_+-\sqrt{P_+(R_++\lambda)}\\[2pt]1
\end{pmatrix}
\right\},
\]
and similarly
\[
E^u_\lambda(-\infty)
=\operatorname{span}\left\{
\begin{pmatrix}
Q_-+\sqrt{P_-(R_-+\lambda)}\\[2pt]1
\end{pmatrix}
\right\}.
\]

\begin{figure}[htp]
\begin{subfigure}[b]{0.45\textwidth}
\centering
\begin{tikzpicture}
\draw[->] (-2,0)--(2,0) ;
\node[above] at (2,0) {$x$};
\draw[->] (0,-2)--(0,2);
\node[right] at (0,2) {$y$};

\draw[green,domain=-1.8:1.8] plot(\x,{-0.3*\x});
\node[above] at (-1.8,0.54) {\small $L^+_\lambda$};
\draw[green,domain=-0.9:0.9] plot(\x,{-2*\x});
\node[above] at (-0.9,1.8) {$L^+_0$};
\draw[<-,domain=-0.99:-0.45] plot(\x,{sqrt(1-\x*\x)});

\draw[blue,domain=-1.8:1.8] plot(\x,{0.3*\x});
\node[above] at (1.8,0.54) {$L^-_\lambda$};
\draw[blue,domain=-0.9:0.9] plot(\x,{2*\x});
\node[above] at (0.9,1.8) {$L^-_0$};
\draw[->,domain=0.45:0.99] plot(\x,{sqrt(1-\x*\x)});
\end{tikzpicture}
\caption{$L_\lambda^+$ approaches the $x$–axis counterclockwise and $L_\lambda^-$ clockwise; no coincidence times on $[0,\widehat \lambda]$.}
\label{Fig.1}
\end{subfigure}
\quad
\begin{subfigure}[b]{0.45\textwidth}
\centering
\begin{tikzpicture}
\draw[->] (-2,0)--(2,0) ;
\node[above] at (2,0) {$x$};
\draw[->] (0,-2)--(0,2);
\node[right] at (0,2) {$y$};

\draw[green,domain=-1.8:1.8] plot(\x,{-0.3*\x});
\node[above] at (-1.8,0.54) {$L^+_\lambda$};
\draw[green,domain=-0.9:0.9] plot(\x,{-2*\x});
\node[above] at (-0.9,1.8) {$L^+_0$};
\draw[->,domain=-0.96:-0.3] plot(\x,{sqrt(1-\x*\x)});

\draw[blue,domain=-1.8:1.8] plot(\x,{0.3*\x});
\node[above] at (1.8,0.54) {$L^-_\lambda$};
\draw[blue,domain=-0.9:0.9] plot(\x,{2*\x});
\node[above] at (0.9,1.8) {$L^-_0$};
\draw[<-,domain=0.3:0.96] plot(\x,{sqrt(1-\x*\x)});
\end{tikzpicture}
\caption{$L_\lambda^+$ approaches the $x$–axis clockwise and $L_\lambda^-$ counterclockwise; one coincidence time on $[0,\widehat\lambda]$.}
\label{Fig.2}
\end{subfigure}
\end{figure}

Thus $E^s_\lambda(+\infty)$ is the line $L_\lambda^+$ through the origin with slope (relative to the $y$–axis) $Q_+-\sqrt{P_+(R_++\lambda)}$, and $E^u_\lambda(-\infty)$ is the line $L_\lambda^-$ with slope $Q_-+\sqrt{P_-(R_-+\lambda)}$. As $\lambda\to+\infty$, $L_\lambda^+$ approaches the $x$–axis counterclockwise and $L_\lambda^-$ clockwise. Hence
\[
\iCLM\bigl(E^s_\lambda(+\infty),E^u_\lambda(-\infty);\lambda\in[0,\widehat{\lambda}]\bigr)
\]
is the number of coincidence times (with multiplicity) of $L_\lambda^+$ and $L_\lambda^-$ for $\lambda\in[0,\widehat\lambda]$.

We distinguish two cases:

\begin{itemize}
\item[][{\bf Case 1}] (Figure~\ref{Fig.1}). If (H2) holds, then $\sqrt{P_\pm R_\pm}\mp Q_\pm >0$, so $L^+_0$ lies in the left, and $L^-_0$ in the right half–plane bounded by the $y$–axis. Hence there are no coincidence times and
\[
\iCLM\bigl(E^s_\lambda(+\infty),E^u_\lambda(-\infty);\lambda\in[0,\widehat{\lambda}]\bigr)=0,
\]
so by the Morse index formula we obtain $\iMor(u)=\igeo(u)$.

\item[][{\bf Case 2}] (Figure~\ref{Fig.2}). If $\sqrt{P_\pm R_\pm}\mp Q_\pm <0$, $Q_+>0$, and $Q_-<0$, then $L^+_0$ lies in the right and $L^-_0$ in the left half–plane. The lines $L^+_\lambda$ and $L^-_\lambda$ intersect exactly once as $\lambda\to+\infty$, so
\[
\iCLM\bigl(E^s_\lambda(+\infty),E^u_\lambda(-\infty);\lambda\in[0,\widehat{\lambda}]\bigr)=1
\]
and $\iMor(u)=\igeo(u)+1$.
\end{itemize}
\end{ex}


{\scriptsize{
\subsection*{Notation}

For the sake of the reader, let us introduce some common notations that we shall use henceforth throughout the paper.
\begin{itemize}
\item $\overline \R\=\R\cup\{\-\infty, +\infty\}$, $\R^+\=[0,+\infty)$,  $\R^-\=(-\infty,0]$. The pair $(\R^n, \langle \cdot, \cdot \rangle)$ denotes the $n$-dimensional Euclidean space  
\item  $\dot{\#}$ stands for denoting  the  derivative of $\#$ with respect to the time variable $t$
\item $\Id_X$ or just $\Id$ will denote the identity operator on a space $X$ and we set for simplicity $I_k := \Id_{\R^k}$ for $k \in\N$
\item $T\R^n\cong \R^n \times \R^n$ denotes the {\em tangent of $\R^n$\/} and $T^*\R^n\cong \R^n \times \R^n$ the {\em cotangent of $\R^n$\/}. $\omega$ stands for the {\em standard symplectic form\/} and the pair $(T^* \R^n, \omega)$ denotes the standard symplectic space. $J\= \begin{pmatrix}
	0 & -\Id\\ \Id & 0
\end{pmatrix}$ denotes the {\em standard symplectic matrix\/} and $\omega(u,v)=\langle Ju, v\rangle$.
\item $\Lagr(n)$ denotes the {\em Lagrangian Grassmannian manifold\/}. $L_D\=\R^n\times \{0\}$ and $L_N=\{0\}\times \R^n$ and we refer to as {\em Dirichlet\/}   and {\em Neumann Lagrangian subspace\/} 
\item $\Mat(n,\R)$ the set of all $n\times n$ matrices; $\Sym(n)$  the set of all $n \times n$ symmetric matrices, $\Sym^+(n)$  the set of all $n \times n$ positive definite and symmetric matrices. $V^+$ and $V^-$ denotes the positive and negative spectral spaces, respectively. $E^s, E^u$ the stable and unstable space respectively.
\item Given the linear subspaces $L_0,L_1$ we write  $L_0 \pitchfork L_1$ meaning that  $L_0 \cap L_1=\{0\}$. 
\item   $\big(\mathcal{H}, (\cdot, \cdot )\big)$ denotes  a real separable  Hilbert space.  $\Lin(\mathcal{H})$ denotes the Banach space of all bounded and linear operators.  $\mathcal{C}^{sa}(\mathcal{H})$ be the set of all (closed) densely defined and selfadjoint operators. We denote by  $\cfsa(\mathcal{H})$ the space of all closed selfadjoint and Fredholm operators equipped with the {\em gap topology\/}. $\sigma(\#)$ denotes the spectrum of the linear operator $\#$.   $\Sf{\#}$ denotes the {\em spectral flow\/} of the path of selfadjoint Fredholm operators $\#$. $\iMor(\#)$ denotes the Morse index. $V^{\pm}(\#)$ denotes the positive and negative spectral space of $\#$. $\im(\#)$ stands for denoting the  image of the operator $\#$. $\iMor (u)$ (resp.  $\iMor(u, L_0,\pm)$) are the Morse indices of the heteroclinic $u$ (resp. future or past  halfclinic orbit $u$).
\item  $\iCLM$-denotes the {\em Maslov index\/} of a pair of Lagrangian paths. $\iota(\#_1,\#_2,\#_3)$ denotes the {\em triple index\/}.  $\igeo(u)$ (resp. $\igeo^\pm$) are the geometrical indices for heteroclinic (resp. future or past halfclinic orbit)
\item $\mathcal A^\pm:=\mathcal A\big\vert_{W^{2,2}(\R^\pm, \R^n)}$ and $\mathcal F^\pm:=\mathcal A\big\vert_{W^{1,2}(\R^\pm, \R^n)}$

\item $\mathcal A_m$ and $\mathcal A_m^\pm$   the {\bf minimal operators} associated to $\mathcal A$ and $\mathcal A^\pm$, respectively

 \item $\mathcal F_m$ and $\mathcal F_m^\pm$   the {\bf minimal operators} associated to $\mathcal F$ and $\mathcal F^\pm$, respectively
\end{itemize}
}}


\section{Fredholmness, hyperbolicity and spectral flows}

In this section we recall the relation between Fredholm properties of the Sturm–Liouville and Hamiltonian realizations and the hyperbolicity of the limiting matrices, and we collect the spectral flow formulas needed in the sequel. Technical details for Sturm–Liouville operators are deferred to Appendix~\ref{sec:fredholm-sturm}.

\subsection{Fredholmness and hyperbolicity}

We begin with the half–line case.

\begin{lem}\label{thm:s.l.+ fredholm iff hyperbolic}
The operator $\mathcal A^+_{L_0}$ (resp.\ $\mathcal A^-_{L_0}$) is Fredholm if and only if $JB(+\infty)$ (resp.\ $JB(-\infty)$) is hyperbolic.
\end{lem}

\begin{proof}
By \cite[Chapter~IV, Theorem~5.35]{Kat80}, Lemma~\ref{lem:relative compact perturbation} and Corollary~\ref{lem:min s.l.o. fredhom iff hyperbolic}, $\mathcal F^+_m$ is Fredholm if and only if $JB(+\infty)$ is hyperbolic. The claim follows from Lemma~\ref{thm:fredholm-equivalent} and Lemma~\ref{thm:equivalence-SL}.
\end{proof}

Consider now the operator
\[
\widetilde{\mathcal A}_{L_0}:=\mathcal A^-_{L_0}\oplus\mathcal A^+_{L_0}
\]
with domain $\dom\widetilde{\mathcal A}_{L_0}=\dom\mathcal A^-_{L_0}\oplus \dom\mathcal A^+_{L_0}\subset L^{2}(\R^-,\R^n)\oplus L^{2}(\R^+,\R^n)$, and set
\[
\widetilde E:=\Bigl\{(u,v)\in W^{2,2}(\R^-,\R^n)\oplus W^{2,2}(\R^+,\R^n)\,\Big|\,
\begin{pmatrix}\dot u(0)\\u(0)\end{pmatrix}
=
\begin{pmatrix}\dot v(0)\\v(0)\end{pmatrix}
\Bigr\}.
\]
Then $\mathcal A$ is the restriction of $\widetilde{\mathcal A}_{L_0}$ to $\widetilde E$.

\begin{lem}\label{thm:s.l. fredholm iff hyperbolic}
The operator $\mathcal A$ is Fredholm if and only if $JB(\pm\infty)$ are both hyperbolic.
\end{lem}

\begin{proof}
Assume $\mathcal A$ is Fredholm. Since $\mathcal A=\widetilde{\mathcal A}_{L_0}|_{\widetilde E}$, we have
\[
\codim\im\mathcal A^+_{L_0} + \codim\im\mathcal A^-_{L_0}
= \codim\im\widetilde{\mathcal A}_{L_0}
\le \codim\im\mathcal A <\infty.
\]
Thus $\codim\im\mathcal A^\pm_{L_0}<\infty$. Lemma~\ref{lem:finite-codim-rge-closed} implies that $\im\mathcal A^\pm_{L_0}$ are closed, hence $\mathcal A^\pm_{L_0}$ are Fredholm. By Lemma~\ref{thm:s.l.+ fredholm iff hyperbolic}, $JB(\pm\infty)$ are hyperbolic.

Conversely, if $JB(\pm\infty)$ are hyperbolic, then \cite{RS95} yields that $\mathcal A$ is Fredholm.
\end{proof}

We now pass to the associated first order Hamiltonian operators. Set
\[
\mathscr F:=-J\frac{d}{dt}-B(t),\qquad t\in I,
\]
and define
\[
W:=W^{1,2}(\R,\R^{2n}),\qquad
W^\pm_{L_0}:=\{z\in W^{1,2}(\R^\pm,\R^{2n})\mid z(0)\in L_0\}.
\]
We then put $\mathcal F:=\mathscr F|_W$ and $\mathcal F^\pm_{L_0}:=\mathscr F|_{W^\pm_{L_0}}$.

\begin{prop}\label{lem:fredholm}
With the above notation,
\begin{align*}
\mathcal A \text{ Fredholm } &\iff \mathcal F \text{ Fredholm } \iff JB(\pm\infty)\text{ hyperbolic},\\
\mathcal A^{\pm}_{L_0} \text{ Fredholm } &\iff \mathcal F^\pm_{L_0} \text{ Fredholm } \iff JB(\pm\infty)\text{ hyperbolic}.
\end{align*}
\end{prop}

\begin{proof}
The equivalence between hyperbolicity and Fredholmness for $\mathcal A$ and $\mathcal A_{L_0}^\pm$ follows from Lemmas~\ref{thm:s.l.+ fredholm iff hyperbolic} and \ref{thm:s.l. fredholm iff hyperbolic}. The equivalence between Fredholmness of $\mathcal A$ and $\mathcal F$ is proved in \cite{RS95}; the corresponding statement for $\mathcal A^\pm_{L_0}$ and $\mathcal F^\pm_{L_0}$ follows from \cite{RS05a,RS05b}.
\end{proof}

\subsection{Spectral flows}

We now construct suitable deformations of the Hamiltonian boundary value problem. Let $[0,1]\ni\lambda\mapsto R_\lambda(t)\in\Sym(n)$ be a continuous path and define
\begin{equation}\label{eq:Hamiltonian-lambda}
\dot z=JB_\lambda(t)z,\qquad
B_\lambda(t):=
\begin{pmatrix}
P^{-1}(t) & -P^{-1}(t)Q(t)\\
-Q(t)^\top P^{-1}(t)
& Q(t)^\top P^{-1}(t)Q(t)-R_\lambda(t)
\end{pmatrix}.
\end{equation}
Set $H_\lambda(t):=JB_\lambda(t)$.

\begin{note}
We denote by $\mathcal A_\lambda$, $\mathcal A_{L_0,\lambda}^\pm$, $\mathcal F_\lambda$, $\mathcal F_{L_0,\lambda}^\pm$ the operators obtained from $\mathcal A$, $\mathcal A_{L_0}^\pm$, $\mathcal F$, $\mathcal F_{L_0}^\pm$ by replacing $R$ with $R_\lambda$.
\end{note}

We assume:

\begin{itemize}
\item[\bf(H3)] There exist continuous paths of hyperbolic Hamiltonian matrices $\lambda\mapsto H_\lambda(\pm\infty)$ such that
\[
H_\lambda(+\infty)=\lim_{t\to+\infty}JB_\lambda(t),\qquad
H_\lambda(-\infty)=\lim_{t\to-\infty}JB_\lambda(t)
\]
uniformly in $\lambda$.
\end{itemize}

Under (H3) and Proposition~\ref{lem:fredholm}, all operators $\mathcal A_\lambda$, $\mathcal A_{L_0,\lambda}^\pm$, $\mathcal F_\lambda$, $\mathcal F_{L_0,\lambda}^\pm$ are selfadjoint Fredholm with dense domain, so their spectral flows are well-defined.

\begin{prop}\label{thm:spectral-flow-equal}
If {\rm(H3)} holds, then
\begin{multline}\label{eq:sf equal1}
\Sf{\mathcal A_\lambda;\lambda\in[0,1]}
=
\Sf{\mathcal F_\lambda;\lambda\in[0,1]},\\
\Sf{\mathcal A^+_{L_0,\lambda};\lambda\in[0,1]}
=
\Sf{\mathcal F_{L_0,\lambda}^{+};\lambda\in[0,1]},\\
\Sf{\mathcal A^-_{L_0,\lambda};\lambda\in[0,1]}
=
\Sf{\mathcal F_{L_0,\lambda}^{-};\lambda\in[0,1]}.
\end{multline}
\end{prop}

\begin{proof}
We prove only the first equality in \eqref{eq:sf equal1}; the other two follow analogously.

\smallskip\noindent
\emph{Step 1. Continuity.}
By (H3) and standard perturbation theory for selfadjoint operators (see \cite[Chapter~4, Section~6, Theorem~2.24]{Kat80}), $\lambda\mapsto\mathcal A_\lambda$ and $\lambda\mapsto\mathcal F_\lambda$ are continuous paths in the gap topology, hence have well-defined spectral flows.

\smallskip\noindent
\emph{Step 2. A comparison homotopy.}
Consider the two–parameter family
\[
h(\lambda,s):=-J\frac{d}{dt}-B_{\lambda,s}(t),
\qquad (\lambda,s)\in[0,1]\times[0,\delta],
\]
with
\[
B_{\lambda,s}(t):=
\begin{pmatrix}
P^{-1}(t) & -P^{-1}(t)Q(t)\\
-Q(t)^\top P^{-1}(t)
& Q(t)^\top P^{-1}(t)Q(t)-R_\lambda(t)-s\,\Id
\end{pmatrix}.
\]
For each fixed $\lambda$, the path $s\mapsto h(\lambda,s)$ is a positive path in the sense of Definition~\ref{def:positive curve}.

\smallskip\noindent
\emph{Step~3. Local comparison in  $s$.}
By virtue of the bijection
\[
\ker h(\lambda,s) \ni w
\;\longmapsto\;
\begin{pmatrix}
P \dot{w}+Qw\\[2pt]
w
\end{pmatrix}
\in
\ker(\mathcal A_{\lambda}+s\Id),
\]
in order to compute the local contribution to the spectral flow it suffices to show that,
for each fixed $\lambda$, the paths
\[
s \longmapsto h(\lambda,s)
\quad \text{and} \quad
s \longmapsto \mathcal A_{\lambda}+s\Id
\]
are positive.

In fact, for every $w \in \ker h(\lambda,s)$ we have
\[
\left\langle \frac{d}{ds} h(\lambda,s) w,\, w \right\rangle
=
\langle w, w \rangle,
\]
and, correspondingly,
\[
\left\langle
\frac{d}{ds}\bigl(\mathcal A_{\lambda}+s\Id\bigr)
\begin{pmatrix}
P \dot{w}+Qw\\[2pt]
w
\end{pmatrix},
\begin{pmatrix}
P \dot{w}+Qw\\[2pt]
w
\end{pmatrix}
\right\rangle
=
\langle w, w \rangle.
\]
Hence both paths are positive at every crossing.

As a consequence, the local spectral flows coincide and we obtain
\[
\Sf{h(\lambda,s);\, s\in[0,\delta]}
=
\sum_{0<s\le\delta}\dim\ker h(\lambda,s)
=
\sum_{0<s\le\delta}\dim\ker(\mathcal A_{\lambda}+s\Id)
=
\Sf{\mathcal A_{\lambda}+s\Id;\, s\in[0,\delta]}.
\]

\smallskip\noindent
\emph{Step 4. Local comparison in $\lambda$.}
Let $\lambda_0$ be a crossing of $\lambda\mapsto\mathcal A_\lambda$. Since $0$ is isolated in the spectrum, there exists $\delta>0$ such that
\[
\ker(\mathcal A_{\lambda_0}+\delta\Id) = \{0\},\qquad
\ker h(\lambda_0,\delta) = \{0\}.
\]
By openness of the set of invertible selfadjoint Fredholm operators, there is $\delta_1>0$ such that
\[
\ker(\mathcal A_\lambda+\delta\Id)=\{0\},\qquad
\ker h(\lambda,\delta)=\{0\}
\]
for all $\lambda\in[\lambda_0-\delta_1,\lambda_0+\delta_1]$. Thus
\[
\Sf{\mathcal A_\lambda+\delta\Id;\ \lambda\in[\lambda_0-\delta_1,\lambda_0+\delta_1]}
=
\Sf{h(\lambda,\delta);\ \lambda\in[\lambda_0-\delta_1,\lambda_0+\delta_1]}
=
0.
\]

Homotopy invariance gives
\begin{equation}\label{eq:spectral flow A}
\Sf{\mathcal A_\lambda;\ \lambda\in[\lambda_0-\delta_1,\lambda_0+\delta_1]}
=
\Sf{\mathcal A_{\lambda_0-\delta_1}+s\Id;\ s\in[0,\delta]}
-
\Sf{\mathcal A_{\lambda_0+\delta_1}+s\Id;\ s\in[0,\delta]},
\end{equation}
and
\begin{equation}\label{eq:spectral flow F}
\Sf{h(\lambda,0);\ \lambda\in[\lambda_0-\delta_1,\lambda_0+\delta_1]}
=
\Sf{h(\lambda_0-\delta_1,s);\ s\in[0,\delta]}
-
\Sf{h(\lambda_0+\delta_1,s);\ s\in[0,\delta]}.
\end{equation}
Since each $s$–path is positive,
\begin{align}\label{eq:sf A= sf F}
\Sf{\mathcal A_{\lambda_0\pm\delta_1}+s\Id;\ s\in[0,\delta]}
&=
\sum_{0<s\le\delta}\dim\ker(\mathcal A_{\lambda_0\pm\delta_1}+s\Id)\\
&=
\sum_{0<s\le\delta}\dim\ker h(\lambda_0\pm\delta_1,s)
=
\Sf{h(\lambda_0\pm\delta_1,s);\ s\in[0,\delta]}.\nonumber
\end{align}
Combining \eqref{eq:spectral flow A}, \eqref{eq:spectral flow F} and \eqref{eq:sf A= sf F} yields the local identity
\begin{equation}\label{eq:sf=local}
\Sf{\mathcal A_\lambda;\ \lambda\in[\lambda_0-\delta_1,\lambda_0+\delta_1]}
=
\Sf{h(\lambda,0);\ \lambda\in[\lambda_0-\delta_1,\lambda_0+\delta_1]}
=
\Sf{\mathcal F_\lambda;\ \lambda\in[\lambda_0-\delta_1,\lambda_0+\delta_1]}.
\end{equation}

\smallskip\noindent
\emph{Step 6. Global conclusion.}
By additivity of the spectral flow under concatenation, applying \eqref{eq:sf=local} along a partition of $[0,1]$ yields
\[
\Sf{\mathcal A_\lambda;\lambda\in[0,1]}
=
\Sf{\mathcal F_\lambda;\lambda\in[0,1]}.
\]
\end{proof}

Let $\gamma_{(\tau,\lambda)}$ be the fundamental solution of \eqref{eq:Hamiltonian-lambda} and denote by $E_\lambda^{s/u}(\tau)$ the associated stable/unstable spaces, and by $E_\lambda^{s}(+\infty)$ and $E_\lambda^{u}(-\infty)$ the asymptotic stable and unstable spaces. Under (H3),
\[
\lim_{\tau\to+\infty}E_\lambda^{s}(\tau)=E_\lambda^{s}(+\infty),
\qquad
\lim_{\tau\to-\infty}E_\lambda^{u}(\tau)=E_\lambda^{u}(-\infty)
\]
in the gap topology (see \cite{AM03}).

\begin{prop}\label{cor: S-L-operator=index}
Under {\rm(H3)}, the following identities hold:
\begin{align}\label{eq:index-formula-1-of-2}
\Sf{\mathcal A_\lambda;\lambda\in[0,1]}
=&\ \iCLM\big(E_1^{s}(\tau),E_1^{u}(-\tau);\tau\in\R^+\big)
-\iCLM\big(E_0^{s}(\tau),E_0^{u}(-\tau);\tau\in\R^+\big)\nonumber\\
&\ -\iCLM\big(E_\lambda^{s}(+\infty),E_\lambda^{u}(-\infty);\lambda\in[0,1]\big),
\end{align}
and
\begin{align}
\Sf{\mathcal A_\lambda^{+};\lambda\in[0,1]}
=&\ \iCLM\big(E_1^{s}(\tau),L_0;\tau\in\R^+\big)
-\iCLM\big(E_0^{s}(\tau),L_0;\tau\in\R^+\big)\nonumber\\
&\ -\iCLM\big(E_\lambda^{s}(+\infty),L_0;\lambda\in[0,1]\big),\\
\Sf{\mathcal A_\lambda^{-};\lambda\in[0,1]}
=&\ \iCLM\big(L_0,E_1^{u}(-\tau);\tau\in\R^+\big)
-\iCLM\big(L_0,E_0^{u}(-\tau);\tau\in\R^+\big)\nonumber\\
&\ -\iCLM\big(L_0,E_\lambda^{u}(-\infty);\lambda\in[0,1]\big).
\end{align}
\end{prop}

\begin{proof}
This follows directly from \cite[Theorem~1]{HP17} together with Proposition~\ref{thm:spectral-flow-equal}.
\end{proof}

\begin{rem}\label{rmk:spectral-flow-morse-indices-difference}
Assume (L1), (H1) and (H2) and consider the path $\lambda\mapsto\mathcal A_\lambda:=\mathcal A+\lambda\Id$. As shown in Section~\ref{sec:fredholm-sturm}, $\lambda\mapsto\mathcal A_\lambda$ is a positive path of selfadjoint Fredholm operators and there exists $\widehat\lambda>0$ such that $\ker\mathcal A_\lambda=\{0\}$ for all $\lambda\ge\widehat\lambda$. For such positive paths it is well known that the spectral flow equals the difference of Morse indices between the endpoints. Hence
\begin{equation}\label{eq:Morse sf flow}
\iMor(\mathcal A)=\Sf{\mathcal A_\lambda;\lambda\in[0,\widehat\lambda]},
\qquad
\iMor(\mathcal A^\pm)=\Sf{\mathcal A^\pm_\lambda;\lambda\in[0,\widehat\lambda]}.
\end{equation}
\end{rem}


\section{Transversality between invariant subspaces}

In this section we give sufficient conditions on the coefficients of the Sturm–Liouville operators to ensure non-degeneracy of the associated operators. This guarantees that the various indices are well defined.

\subsection{Transversality for heteroclinics}

\begin{lem}\label{lem:matrix positive condition}
Assume {\rm(L1)}–{\rm(L2)}. Then
\[
K_\lambda(t)=
\begin{pmatrix}
P(t) & Q(t)\\
Q(t)^T & R(t)+\lambda\Id
\end{pmatrix}
\]
is positive definite for all $(t,\lambda)\in\R\times\bigl[\frac{8C_2^2}{C_1}+C_3,\infty\bigr)$, where $C_1,C_2,C_3$ are the constants in {\rm(L2)}.
\end{lem}

\begin{proof}
For $(u,v)\in\R^n\times\R^n$ we have, using {\rm(L2)} and Cauchy–Schwarz,
\[
\langle K_\lambda(t)(u,v),(u,v)\rangle
\ge C_1|u|^2 - 2 C_2 |u||v| + (\lambda - C_3)|v|^2 .
\]
By Young’s inequality, for any $\varepsilon>0$,
\[
2C_2|u||v|\le 2C_2\bigl(\varepsilon|u|^2 + \varepsilon^{-1}|v|^2\bigr),
\]
hence
\[
\langle K_\lambda(t)(u,v),(u,v)\rangle
\ge (C_1-2\varepsilon C_2)|u|^2 + (\lambda - C_3 - 2C_2/\varepsilon)|v|^2 .
\]
Taking $\varepsilon=C_1/(4C_2)$ yields $C_1-2\varepsilon C_2=C_1/2>0$ and $2C_2/\varepsilon=8C_2^2/C_1$; the form is positive for all $(u,v)\neq0$ when $\lambda\ge \frac{8C_2^2}{C_1}+C_3$.
\end{proof}

\begin{lem}\label{lem:matrx define*}
Assume {\rm(L1)}–{\rm(L2)} and let $D>0$ with $D\le C_0$. Then
\[
M(t,\lambda)=
\begin{pmatrix}
P(t) & Q(t)+D\Id\\
Q(t)^T+D\Id & R(t)+\lambda\Id
\end{pmatrix}
\]
is positive definite for all $(t,\lambda)\in\R\times\bigl[\frac{8(C_2+C_0)^2}{C_1}+C_3,\infty\bigr)$.
\end{lem}

\begin{proof}
As in Lemma~\ref{lem:matrix positive condition}, for $z=(x,y)$,
\[
\langle M(t,\lambda)z,z\rangle
\ge C_1|x|^2 -2(C_2+D)|x||y| + (\lambda-C_3)|y|^2.
\]
Using $2ab\le a^2+b^2$ with $a=\sqrt{C_1}|x|$ and $b=(C_2+D)|y|/\sqrt{C_1}$ gives
\[
2(C_2+D)|x||y|
\le C_1|x|^2 + \frac{(C_2+D)^2}{C_1}|y|^2,
\]
so
\[
\langle M(t,\lambda)z,z\rangle
\ge\Bigl(\lambda - C_3 - \tfrac{(C_2+D)^2}{C_1}\Bigr)|y|^2.
\]
Since $(C_2+D)^2\le (C_2+C_0)^2$, the right-hand side is positive if 
$\lambda\ge \frac{8(C_2+C_0)^2}{C_1}+C_3$; and for $y=0$ we have
$\langle M(t,\lambda)z,z\rangle\ge C_1|x|^2>0$.
\end{proof}

For $s>0$ consider the operators
\[
\mathcal A_{s,M}^{\pm,\lambda}u
=
-\frac{d}{dt}\bigl(P(t\!\pm\! s)u'(t)+Q(t\!\pm\! s)u(t)\bigr)
+Q(t\!\pm\! s)^T u'(t)
+R_\lambda(t\!\pm\! s)u(t),
\]
on $W^{2,2}(\R^\pm,\R^n)\subset L^2(\R^\pm,\R^n)$, where $R_\lambda(t)=R(t)+\lambda\Id$.

\begin{lem}\label{lem:system_future_past_1}
Assume {\rm(L1)}–{\rm(L2)}.  
If $\lambda\ge\frac{8C_2^2}{C_1}+C_3$, then the system
\[
\begin{cases}
\mathcal A_{s,M}^{+,\lambda}x_1(t)=0,\quad t>0,\\
\mathcal A_{s,M}^{-,\lambda}x_2(t)=0,\quad t<0,\\
x_1(0)=x_2(0),\\
P(s)\dot x_1(0)+Q(s)x_1(0)=P(-s)\dot x_2(0)+Q(-s)x_2(0)
\end{cases}
\]
admits only the trivial solution.
\end{lem}

\begin{proof}
Assume $(x_1,x_2)\not\equiv 0$ solves the system.  
Integrating by parts,
\[
\langle\mathcal A_{s,M}^{+,\lambda}x_1,x_1\rangle_{L^2}=I_1
+\langle P(s)\dot x_1(0)+Q(s)x_1(0),x_1(0)\rangle,
\]
\[
\langle\mathcal A_{s,M}^{-,\lambda}x_2,x_2\rangle_{L^2}
=I_2
-\langle P(-s)\dot x_2(0)+Q(-s)x_2(0),x_2(0)\rangle,
\]
where
\[
I_1=\int_0^\infty\!\Big(
\langle P(t\!+\!s)\dot x_1,\dot x_1\rangle
+\langle Q(t\!+\!s)x_1,\dot x_1\rangle
+\langle Q(t\!+\!s)^T\dot x_1,x_1\rangle
+\langle R_\lambda(t\!+\!s)x_1,x_1\rangle
\Big)\,dt,
\]
and $I_2$ is defined similarly on $(-\infty,0]$.  
Using the boundary condition at $t=0$ the boundary terms cancel, so $I_1+I_2=0$.

Set
\[
K_\lambda(t\pm s)=
\begin{pmatrix}
P(t\pm s) & Q(t\pm s)\\
Q(t\pm s)^T & R(t\pm s)+\lambda\Id
\end{pmatrix}.
\]
Then
\[
I_1=\int_0^\infty 
\begin{pmatrix}\dot x_1\\ x_1\end{pmatrix}^T
K_\lambda(t+s)
\begin{pmatrix}\dot x_1\\ x_1\end{pmatrix} dt,
\quad
I_2=\int_{-\infty}^0 
\begin{pmatrix}\dot x_2\\ x_2\end{pmatrix}^T
K_\lambda(t-s)
\begin{pmatrix}\dot x_2\\ x_2\end{pmatrix} dt.
\]
By Lemma~\ref{lem:matrix positive condition}, $K_\lambda$ is positive definite for $\lambda\ge\frac{8C_2^2}{C_1}+C_3$, hence $I_1,I_2\ge0$ and $I_1+I_2=0$ implies $x_1\equiv x_2\equiv 0$, a contradiction.
\end{proof}

\begin{cor}\label{thm:operator-non-degenerate}
Assume {\rm(L2)}. Then $\mathcal A_\lambda$ is non-degenerate for every 
\(
\lambda\ge \frac{8C_2^2}{C_1}+C_3.
\)
\end{cor}

\begin{proof}
A function $u\in W^{2,2}(\R,\R^n)$ lies in $\ker\mathcal A_\lambda$ iff its restrictions
\[
x_1(t):=u(t)\ (t>0),\qquad x_2(t):=u(t)\ (t<0)
\]
solve the system in Lemma~\ref{lem:system_future_past_1} with $s=0$ and the matching conditions 
$x_1(0)=x_2(0)$, $P(0)\dot x_1(0)+Q(0)x_1(0)=P(0)\dot x_2(0)+Q(0)x_2(0)$.  
Lemma~\ref{lem:system_future_past_1} then implies $u\equiv 0$.
\end{proof}

Now consider the first order operators $\mathcal F_{s,M}^{\pm,\lambda}$ associated to $\mathcal A_{s,M}^{\pm,\lambda}$.  
If $z=(p,x)$ solves $\mathcal F_{s,M}^{\pm,\lambda}z=0$, then $x\in W^{2,2}(\R^\pm,\R^n)$ solves $\mathcal A_{s,M}^{\pm,\lambda}x=0$ and
\[
p(t)=P(t\pm s)\dot x(t)+Q(t\pm s)x(t).
\]

\begin{lem}\label{lem:system_future_past}
Assume {\rm(L2)} and $\lambda \ge \frac{8C_2^2}{C_1}+C_3$. 
Then the initial value problem
\[
\begin{cases}
\mathcal F_{s,M}^{+,\lambda}z_1(t)=0,\\
\mathcal F_{s,M}^{-,\lambda}z_2(t)=0,\\
z_1(0)=z_2(0)
\end{cases}
\]
admits only the trivial solution.
\end{lem}

\begin{proof}
If $(z_1,z_2)\not\equiv (0,0)$ solves the system, write
\[
z_1(t)=(p_1(t),x_1(t)),\quad t>0;\qquad
z_2(t)=(p_2(t),x_2(t)),\quad t<0.
\]
Then $x_i$ solve $\mathcal A_{s,M}^{\pm,\lambda}x_i=0$ and
\[
p_1(t)=P(t+s)\dot x_1(t)+Q(t+s)x_1(t),\quad
p_2(t)=P(t-s)\dot x_2(t)+Q(t-s)x_2(t).
\]
The condition $z_1(0)=z_2(0)$ yields precisely the boundary conditions in Lemma~\ref{lem:system_future_past_1}. Hence $(x_1,x_2)$ is a nontrivial solution of that problem, contradicting Lemma~\ref{lem:system_future_past_1}.
\end{proof}

\begin{prop}\label{lem:index=0 at infinity:real line}
Under {\rm(L2)},
\[
E_\lambda^u(-\tau)\cap E_\lambda^s(\tau)=\{0\}
\quad\text{for all}\quad
(\tau,\lambda)\in\R^+\times\Bigl[\tfrac{8C_2^2}{C_1}+C_3,\infty\Bigr).
\]
\end{prop}

\begin{proof}
Fix $\tau>0$ and set $s=\tau$.  
The stable (resp.\ unstable) space at $t=0$ of $\mathcal F_{s,M}^{+,\lambda}$ (resp.\ $\mathcal F_{s,M}^{-,\lambda}$) is $E_\lambda^s(\tau)$ (resp.\ $E_\lambda^u(-\tau)$).  

Let $\mathcal S$ be the space of pairs of solutions
\[
(z_1,z_2),\qquad
\mathcal F_{s,M}^{+,\lambda}z_1=0\ \text{on }(0,\infty),\quad
\mathcal F_{s,M}^{-,\lambda}z_2=0\ \text{on }(-\infty,0),
\]
such that $z_1(0)=z_2(0)$.  
Define
\[
\Phi:\mathcal S\to E_\lambda^u(-\tau)\cap E_\lambda^s(\tau),\qquad
\Phi(z_1,z_2)=z_1(0)=z_2(0).
\]
By uniqueness of solutions in stable/unstable manifolds, $\Phi$ is an isomorphism.  
By Lemma~\ref{lem:system_future_past}, for 
$\lambda\ge\frac{8C_2^2}{C_1}+C_3$ the only such pair is $(0,0)$, so 
$E_\lambda^u(-\tau)\cap E_\lambda^s(\tau)=\{0\}$.
\end{proof}

\subsection{Transversality for the halfclinic case}

We now treat the half-line case with general Lagrangian boundary conditions.

Let $(y,x)^T\in L_0$, and denote by $L_N:=\{0\}\times\R^n$ the Neumann Lagrangian.  
Set
\[
V(L_0):=(L_0+L_D)\cap L_N.
\]
Elements of $V(L_0)$ are of the form $(0,x)$.  
Using the orthogonal decomposition $\R^n=V(L_0)\oplus V(L_0)^\perp$, write $y=y_1+y_2$ with $y_1\in V(L_0)$, $y_2\in V(L_0)^\perp$.  
In a basis of $V(L_0)$ there is a symmetric matrix $A$ such that $y_1=Ax$, hence
\[
\langle y,x\rangle=\langle Ax,x\rangle
\]
for all $(y,x)\in L_0$.  
Thus there exists $C_0>0$ such that
\begin{equation}\label{eq:estimation for L_0}
|\langle Ax,x\rangle|\le C_0|x|^2\qquad\text{for all }(0,x)\in V(L_0),
\end{equation}
and in particular
\begin{equation}\label{eq:estimate_(x,y)}
|\langle y,x\rangle|\le C_0|x|^2\qquad\text{for all }(y,x)\in L_0.
\end{equation}

\begin{lem}\label{lem:operator non-degenerate+}
If {\rm(L2)} holds, then $\mathcal A_{L_0,\lambda}^{\pm}$ is non-degenerate for every 
\[
\lambda\ge \frac{8(C_2+C_0)^2}{C_1}+C_3,
\]
where $C_0$ is given by \eqref{eq:estimation for L_0}.
\end{lem}

\begin{proof}
We treat $\mathcal A_{L_0,\lambda}^+$; the minus case is similar.  

Let $x\in\ker\mathcal A_{L_0,\lambda}^+$. Then
\[
\begin{cases}
\mathcal A_{M,\lambda}^+ x=0,\\[2pt]
\bigl(P(0)\dot x(0)+Q(0)x(0),\, x(0)\bigr)^T \in L_0,
\end{cases}
\]
where
\[
\mathcal A_{M,\lambda}^+
=
-\frac{d}{dt}\bigl(P(t)x'(t)+Q(t)x(t)\bigr)
+Q(t)^T x'(t)
+(R(t)+\lambda\Id)x(t).
\]

Integration by parts gives
\[
\begin{aligned}
\langle \mathcal A_{M,\lambda}^+ x, x\rangle_{L^2}
&=
\langle P\dot x,\dot x\rangle_{L^2}
+\langle Qx,\dot x\rangle_{L^2}
+\langle Q^T\dot x,x\rangle_{L^2}
+\langle (R+\lambda\Id)x,x\rangle_{L^2} \\
&\quad
+\langle P(0)\dot x(0)+Q(0)x(0),x(0)\rangle \\
&\ge
\langle P\dot x,\dot x\rangle_{L^2}
+\langle Qx,\dot x\rangle_{L^2}
+\langle Q^T\dot x,x\rangle_{L^2}
+\langle Rx,x\rangle_{L^2}
-C_0|x(0)|^2,
\end{aligned}
\]
using \eqref{eq:estimate_(x,y)}.

Moreover
\[
|x(0)|^2=-\int_0^\infty\frac{d}{dt}|x(t)|^2\,dt
= -2\langle \dot x,x\rangle_{L^2},
\]
so
\[
\begin{aligned}
0
&=\langle \mathcal A_{M,\lambda}^+ x, x\rangle_{L^2} \\
&\ge 
\langle P\dot x,\dot x\rangle_{L^2}
+\langle (Q+C_0\Id)x,\dot x\rangle_{L^2}
+\langle (Q^T+C_0\Id)\dot x,x\rangle_{L^2}
+\langle Rx,x\rangle_{L^2}.
\end{aligned}
\]
Equivalently,
\[
0 \ge \int_0^\infty 
\begin{pmatrix}\dot x(t) \\ x(t)\end{pmatrix}^T
\begin{pmatrix}
P(t) & Q(t)+C_0\Id \\
Q(t)^T+C_0\Id & R(t)+\lambda\Id
\end{pmatrix}
\begin{pmatrix}\dot x(t) \\ x(t)\end{pmatrix}
dt.
\]
By Lemma~\ref{lem:matrx define*}, the matrix in the integral is positive definite for all 
$\lambda\ge \frac{8(C_2+C_0)^2}{C_1}+C_3$, hence the integrand vanishes only when $x\equiv0$.
\end{proof}

Arguing as in Proposition~\ref{lem:index=0 at infinity:real line} and using 
Lemma~\ref{lem:operator non-degenerate+}, we obtain the corresponding statement for the
half-line boundary condition.

\begin{lem}\label{lem:index=0 at infinity:half line}
Assume {\rm(L1)}–{\rm(L2)}. For every 
\[
(\tau,\lambda)\in\R^+\times\Bigl[\tfrac{8(C_2+C_0)^2}{C_1}+C_3,\infty\Bigr)
\]
we have
\[
E_\lambda^u(-\tau)\cap L_0=\{0\},
\qquad 
E_\lambda^s(\tau)\cap L_0=\{0\}.
\]
\end{lem}

\begin{proof}
We prove $E_\lambda^u(-\tau)\cap L_0=\{0\}$; the other case is analogous.

Let $v\in E_\lambda^u(-\tau)\cap L_0$.  
There exists a unique solution $z(t)=(p(t),x(t))$ of $\mathcal F_{s,M}^{-,\lambda}z=0$ on $(-\infty,0]$
with $s=\tau$, $z(0)=v$, and $z(t)\to0$ as $t\to-\infty$.  
Then $x$ solves $\mathcal A_{s,M}^{-,\lambda}x=0$ and
\[
\bigl(p(0),x(0)\bigr)^T
=
\bigl(P(-\tau)\dot x(0)+Q(-\tau)x(0),x(0)\bigr)\in L_0.
\]
Thus $x$ solves the boundary problem defining $\mathcal A_{L_0,\lambda}^-$.  
By Lemma~\ref{lem:operator non-degenerate+}, $x\equiv0$, so $v=z(0)=0$.
\end{proof}

\subsection{CLM-index of the (un)stable paths at infinity}

We now compute
\[
\iCLM\bigl(E^s_\lambda(+\infty),E^u_\lambda(-\infty);\ \lambda\in[0,\widehat\lambda]\bigr),
\]
via the triple indices
\[
\iota\bigl(E^u_{\widehat\lambda}(-\infty),E^s_{\widehat\lambda}(+\infty);L_D\bigr),
\qquad
\iota\bigl(E^u_{0}(-\infty),E^s_{0}(+\infty);L_D\bigr).
\]

Since $E_\lambda^{s/u}(\pm\infty)$ are Lagrangian and transversal to $L_D$, they admit the graph
representations
\[
E_\lambda^u(-\infty)=
\left\{
\begin{pmatrix}
N_\lambda u\\
u
\end{pmatrix}
: u\in\R^n
\right\},
\qquad
E_\lambda^s(+\infty)=
\left\{
\begin{pmatrix}
M_\lambda u\\
u
\end{pmatrix}
: u\in\R^n
\right\},
\]
with symmetric $M_\lambda,N_\lambda$.

For $u\in\R^n$,
\[
\begin{pmatrix}
N_\lambda u\\ u
\end{pmatrix}
=
\begin{pmatrix}
M_\lambda u\\ u
\end{pmatrix}
+
\begin{pmatrix}
(N_\lambda-M_\lambda)u\\ 0
\end{pmatrix},
\]
and from the definition of the triple form and \eqref{eq:trip1},
\begin{align}\label{eq:expression of Q}
&Q\!\left(E_\lambda^u(-\infty),E_\lambda^s(+\infty),L_D\right)
\Biggl(
\begin{pmatrix}N_\lambda u\\ u\end{pmatrix},
\begin{pmatrix}(N_\lambda-M_\lambda)u\\ 0\end{pmatrix}
\Biggr) \notag\\
&\qquad
=\left\langle 
\begin{pmatrix}
0 & -\Id \\ \Id & 0
\end{pmatrix}
\begin{pmatrix}
M_\lambda u \\ u
\end{pmatrix},
\begin{pmatrix}
(N_\lambda-M_\lambda)u \\ 0
\end{pmatrix}
\right\rangle 
=
\langle (M_\lambda-N_\lambda)u,\,u\rangle.
\end{align}
Thus the sign of the triple index is determined by $M_\lambda-N_\lambda$.

Consider the asymptotic first order system
\[
\mathscr F_\lambda^{+\infty}
:=-J\frac{d}{dt}-B_\lambda(+\infty),
\]
and its second order operator $\mathcal A_\lambda^{+\infty}$ (maximal realization 
$\mathcal A_{\lambda,M}^{+\infty}$).  
For $x\in\ker \mathcal A_{\lambda,M}^{+\infty}$, the map
\[
x\longmapsto
\bigl(P_+ \dot x(0)+Q_+ x(0),\,x(0)\bigr)
\]
induces a bijection between $\ker\mathcal A_{\lambda,M}^{+\infty}$ and 
\[
E_\lambda^s(0)=V^-\bigl(JB_\lambda(+\infty)\bigr).
\]
A direct computation yields
\[
\begin{aligned}
0
&=\langle \mathcal A_{\lambda,M}^{+\infty} x,\,x\rangle_{L^2} \\
&=\langle P_+ \dot x,\dot x\rangle_{L^2}
  +\langle Q_+ x,\dot x\rangle_{L^2}
  +\langle Q_+^T \dot x,x\rangle_{L^2}
  +\langle (R_+ +\lambda\Id)x,x\rangle_{L^2}
  +\langle M_\lambda x(0),x(0)\rangle .
\end{aligned}
\]
For $v=x(0)$,
\begin{equation}\label{eq:M negative}
\langle M_\lambda v,v\rangle
=
-\int_0^\infty
\Biggl\langle 
K_\lambda
\begin{pmatrix}\dot x(t) \\ x(t)\end{pmatrix},
\begin{pmatrix}\dot x(t) \\ x(t)\end{pmatrix}
\Biggr\rangle dt,
\qquad
K_\lambda:=
\begin{pmatrix}
P_+ & Q_+ \\ Q_+^T & R_+ +\lambda\Id
\end{pmatrix}.
\end{equation}
An analogous formula holds for $N_\lambda$ at $-\infty$ with an integral over $(-\infty,0]$.

Whenever $K_\lambda$ is positive definite (by Lemma~\ref{lem:matrix positive condition}, this holds for all $\lambda\ge \frac{8C_2^2}{C_1}+C_3$), 
\[
\langle M_\lambda v,v\rangle<0,\qquad
\langle N_\lambda v,v\rangle>0
\quad (v\neq0).
\]

\begin{lem}\label{thm:M0N0}
If {\rm(H2)} holds, then $M_0$ is negative definite and $N_0$ is positive definite. 
If also {\rm(L1)}–{\rm(L2)} hold, then for all
\[
\lambda \ge \frac{8C_2^2}{C_1} + C_3
\]
the matrices $M_\lambda$ and $N_\lambda$ are respectively negative and positive definite.
\end{lem}

\begin{proof}
Under {\rm(H2)}, $K_0$ is positive definite at $\pm\infty$, so \eqref{eq:M negative} (and its backward analogue) give $M_0<0$ and $N_0>0$.  
If {\rm(L1)}–{\rm(L2)} hold, Lemma~\ref{lem:matrix positive condition} implies $K_\lambda>0$ for all 
$\lambda \ge \frac{8C_2^2}{C_1} + C_3$, hence $M_\lambda<0$ and $N_\lambda>0$.
\end{proof}

\begin{lem}\label{thm:epressionQ+}
Assume {\rm(L1)}–{\rm(L2)} and let $(u,v)\in L_0$. Then
\[
\langle M_\lambda v, v\rangle - \langle u, v\rangle \le 0 
\qquad\text{for all}\qquad 
\lambda \ge \frac{8(C_2+C_0)^2}{C_1} + C_3.
\]
\end{lem}

\begin{proof}
Let $x\in\ker \mathcal A_{\lambda,M}^{+\infty}$ with $x(0)=v$.  
From the previous computation one obtains
\[
\langle M_\lambda v, v\rangle - \langle u, v\rangle
\le
-\int_0^\infty 
\Biggl\langle 
\Bigl(K_\lambda + C_0 
\begin{pmatrix} 
0 & \Id \\ \Id & 0 
\end{pmatrix}\Bigr)
\binom{\dot x(t)}{x(t)},\,
\binom{\dot x(t)}{x(t)}
\Biggr\rangle dt.
\]
By Lemma~\ref{lem:matrx define*}, the matrix in parentheses is positive definite for 
$\lambda \ge \frac{8(C_2+C_0)^2}{C_1} + C_3$, so the integral is non-positive.
\end{proof}

\begin{rem}\label{rem:epression Q^+}
Assume 
\(
\lambda \ge \frac{8(C_2+C_0)^2}{C_1}+C_3.
\)
Let 
\(
\binom{u}{0}\in L_D \cap \bigl(L_0 + E_\lambda^s(+\infty)\bigr).
\)
Then there exists $v\in\R^n$ such that
\[
\binom{u}{0}
=
\binom{-M_\lambda v + u}{-v}
+
\binom{M_\lambda v}{v},
\]
with the first vector in $L_0$ and the second in $E_\lambda^s(+\infty)$.  
Hence
\[
\begin{aligned}
Q\!\left(L_D, L_0, E_\lambda^s(+\infty)\right)
\!\left(\binom{u}{0},\binom{u}{0}\right)
&=
\omega\!\left(
\binom{-M_\lambda v + u}{-v},
\binom{M_\lambda v}{v}
\right) \\
&=
\langle u, v\rangle
=
\langle M_\lambda v, v\rangle - \langle M_\lambda v - u, v\rangle.
\end{aligned}
\]
Since $\binom{M_\lambda v - u}{v}\in L_0$, Lemma~\ref{thm:epressionQ+} applied to $(u',v')=(M_\lambda v-u,v)$ yields
\[
Q\!\left(L_D,L_0,E_\lambda^s(+\infty)\right)\le 0,
\]
so
\[
\coiMor\bigl(Q(L_D,L_0,E_\lambda^s(+\infty))\bigr)=0.
\]
A symmetric argument for $E_\lambda^u(-\infty)$ gives
\[
\coiMor\bigl(Q(E_\lambda^{u}(-\infty), L_0, L_D)\bigr)=0.
\]
\end{rem}

\begin{cor}\label{cor:H2imply0} 
If {\rm(H2)} holds, then
\begin{equation}\label{corf:H20 }
\iota\bigl(E_0^{u}(-\infty),\,E_0^{s}(+\infty);\, L_D \bigr)=0.
\end{equation}
\end{cor}

\begin{proof}
Under {\rm(H2)},
\[
E_0^u(-\infty)=\left\{\binom{N_0u}{u}:u\in\R^n\right\},
\qquad
E_0^s(+\infty)=\left\{\binom{M_0u}{u}:u\in\R^n\right\},
\]
with $N_0>0$, $M_0<0$ by Lemma~\ref{thm:M0N0}.  
Using \eqref{eq:expression of Q} with $\lambda=0$,
\[
Q\!\left(E_0^{u}(-\infty),E_0^{s}(+\infty),L_D\right)
\Bigl(\binom{N_0u}{u},\binom{(N_0-M_0)u}{0}\Bigr)
=
\langle (M_0 - N_0)u,\,u\rangle,
\]
so the associated quadratic form is represented by $M_0-N_0$, which is negative definite.  
Hence
\[
\coiMor\!\left(Q(E_0^{u}(-\infty),E_0^{s}(+\infty);L_D)\right)=0.
\]
Since $E_0^{u}(-\infty)\cap L_D=\{0\}$, the triple-index formula implies
\[
\iota\!\left(E_0^{u}(-\infty),E_0^{s}(+\infty);L_D\right)
=
\coiMor\!\left(Q(E_0^{u}(-\infty),E_0^{s}(+\infty);L_D)\right)
=0.
\]
\end{proof}


\section{Proof of the main results}
In this section we prove the results stated in Section~\ref{sec:introduction}.

\subsection{Proof of Theorem~\ref{thm:morse-index-formula}}

Set
\[
R_\lambda := R + \lambda \Id,
\qquad
\widehat \lambda := \frac{8C_2^2}{C_1} + C_3.
\]
By construction and Lemmas~\ref{lem:matrix positive condition} and~\ref{thm:M0N0}, for
$\lambda\ge\widehat\lambda$ all asymptotic matrices $K_\lambda$ are positive definite and the
corresponding stable/unstable spaces are transversal.

By Remark~\ref{rmk:spectral-flow-morse-indices-difference},
\[
\iMor(u) = \Sf{\mathcal A_\lambda ; \lambda\in[0,\widehat\lambda]}.
\]
Applying the splitting formula~\eqref{eq:index-formula-1-of-2} gives
\begin{multline}\label{eq:morse index =maslov index:real line}
\iMor(u)
=
\iCLM\bigl(E^s_{\widehat{\lambda}}(\tau),E^u_{\widehat{\lambda}}(-\tau);\tau\in\R^+\bigr)
-
\iCLM\bigl(E^s_{0}(\tau),E^u_{0}(-\tau);\tau\in\R^+\bigr) \\
-
\iCLM\bigl(E^s_\lambda(+\infty),E^u_\lambda(-\infty);\lambda\in[0,\widehat{\lambda}]\bigr).
\end{multline}

\subsubsection*{Step 1: Asymptotic contribution at $\pm\infty$}

By Lemma~\ref{lem: maslov triple index (pair)},
\[
\iCLM\!\left(E^s_\lambda(+\infty),E^u_\lambda(-\infty);\lambda\in[0,\widehat{\lambda}]\right)
=
\iota\!\left(E_{\widehat\lambda}^{u}(-\infty), E_{\widehat\lambda}^{s}(+\infty); L_D \right)
-
\iota\!\left(E_{0}^{u}(-\infty), E_{0}^{s}(+\infty); L_D \right).
\]
For $\lambda=\widehat\lambda$, Lemma~\ref{thm:M0N0} and~\eqref{eq:expression of Q} imply that
$E_{\widehat\lambda}^{u}(-\infty)$ and $E_{\widehat\lambda}^{s}(+\infty)$ are graphs of definite
matrices (one positive, one negative) and form, together with $L_D$, a transverse triple. Hence
\[
\iota\!\left(E_{\widehat\lambda}^{u}(-\infty), E_{\widehat\lambda}^{s}(+\infty); L_D \right)=0,
\]
so
\[
\iCLM\!\left(E^s_\lambda(+\infty),E^u_\lambda(-\infty);\lambda\in[0,\widehat{\lambda}]\right)
=
-\iota\!\left(E_{0}^{u}(-\infty), E_{0}^{s}(+\infty); L_D \right).
\]

\subsubsection*{Step 2: Contribution along the real line at large $\lambda$}

By Proposition~\ref{lem:index=0 at infinity:real line}, for $\lambda=\widehat\lambda$ the stable
and unstable spaces satisfy
\[
E_{\widehat\lambda}^u(-\tau)\cap E_{\widehat\lambda}^s(\tau)=\{0\}
\quad \forall\,\tau>0,
\]
so the path
\(
\tau \mapsto (E^s_{\widehat\lambda}(\tau),E^u_{\widehat\lambda}(-\tau))
\)
has no crossings and
\[
\iCLM\left(E^s_{\widehat{\lambda}}(\tau),E^u_{\widehat{\lambda}}(-\tau);\tau\in\R^+\right)=0.
\]

\subsubsection*{Step 3: Conclusion}

Substituting into~\eqref{eq:morse index =maslov index:real line} yields
\[
\iMor(u)
=
-\,\iCLM\left(E^s_{0}(\tau),E^u_{0}(-\tau);\tau\in\R^+\right)
+
\iota\bigl(E_{0}^{u}(-\infty), E_{0}^{s}(+\infty); L_D \bigr).
\]
By Definition~\ref{def:h-clinic index},
\[
\igeo(u)
:=
-\,\iCLM\left(E^s_{0}(\tau),E^u_{0}(-\tau);\tau\in\R^+\right),
\]
whence
\[
\iMor(u)
=
\igeo(u)
+
\iota\bigl(E_{0}^{u}(-\infty), E_{0}^{s}(+\infty); L_D \bigr),
\]
which is Theorem~\ref{thm:morse-index-formula}. \qed

\subsection{Proof of Theorem~\ref{thm:morse index formula for halfclinic}}

We first treat the future halfclinic case, i.e.\ Equation~\eqref{eq:Morse Maslov+}.  
Under our assumptions,
\[
\Sf{\mathcal A_\lambda^+;\lambda\in[0,\widehat\lambda]}
=\iMor(u,L_0,+).
\]
Using~\eqref{eq:index-formula-1-of-2}, we obtain
\begin{align}\label{morse index+1.1}
\iMor(u,L_0,+)
&=
\iCLM\bigl(E_{\widehat \lambda}^{s}(\tau),L_0;\tau\in\R^+\bigr)
-
\iCLM\bigl(E_0^{s}(\tau),L_0;\tau\in\R^+\bigr) \\
&\quad -\,
\iCLM\bigl(E_\lambda^{s}(+\infty),L_0;\lambda\in[0,\widehat\lambda]\bigr).
\end{align}

\subsubsection*{Step 1: Large-$\lambda$ terms}

Now choose
\[
\widehat\lambda = \frac{8(C_2+C_0)^2}{C_1} + C_3,
\]
so Lemma~\ref{lem:matrx define*} applies at $+\infty$ and (H2) holds for the asymptotic system.
By Lemma~\ref{lem:index=0 at infinity:half line},
\[
E_{\widehat{\lambda}}^{s}(\tau)\cap L_0 = \{0\}
\quad\forall\,\tau>0,
\]
hence
\[
\iCLM\bigl(E_{\widehat{\lambda}}^{s}(\tau),L_0;\tau\in\R^+\bigr)=0.
\]

Moreover, $E_\lambda^{s}(+\infty)\cap L_D=\{0\}$ for all $\lambda\ge0$ (transversality at infinity),
so
\[
\iCLM\bigl(E_\lambda^{s}(+\infty),L_D;\lambda\in[0,\widehat{\lambda}]\bigr)=0.
\]

\subsubsection*{Step 2: Triple index term}

From Remark~\ref{rem:epression Q^+}, for 
\(
\lambda \ge \frac{8(C_2+C_0)^2}{C_1}+C_3
\)
the quadratic form
\(
Q(L_D,L_0,E_\lambda^{s}(+\infty))
\)
satisfies
\[
m^+\!\left(Q(L_D,L_0,E_\lambda^{s}(+\infty))\right)=0.
\]
Using~\eqref{morse index+1.1}, Definition~\ref{def:hormander index},
Equation~\eqref{eq:the Hormander index computed by triple index}, and~\eqref{eq:trip1}, we obtain
\begin{align}
\iMor(u,L_0,+)
&=
\iota^+_{L_0}(u)
-
\iCLM\bigl(E_\lambda^{s}(+\infty),L_0;\lambda\in[0,\widehat{\lambda}]\bigr) \\
&=
\iota^+_{L_0}(u)
-
\bigl(
\iCLM(E_\lambda^{s}(+\infty),L_0) 
-
\iCLM(E_\lambda^{s}(+\infty),L_D)
\bigr) \\
&=
\iota^+_{L_0}(u)
-
s(L_D,L_0;E_0^{s}(+\infty),E_{\widehat{\lambda}}^{s}(+\infty)) \\
&=
\iota^+_{L_0}(u)
-
\iota(L_D,L_0,E_{\widehat{\lambda}}^{s}(+\infty))
+
\iota(L_D,L_0,E_0^{s}(+\infty)).
\end{align}

As in Remark~\ref{rem:epression Q^+}, the triple
\(
(L_D,L_0,E_{\widehat{\lambda}}^{s}(+\infty))
\)
is transverse and the associated quadratic form has no positive eigenvalues, so
\[
\iota(L_D,L_0,E_{\widehat{\lambda}}^{s}(+\infty)) = 0.
\]
Hence
\[
\iMor(u,L_0,+)
=
\iota^+_{L_0}(u)
+
\iota\bigl(L_D,L_0,E_0^{s}(+\infty)\bigr),
\]
which is~\eqref{eq:Morse Maslov+}.

\subsubsection*{Step 3: Past halfclinic}

The past halfclinic case follows by the same argument, applied to the unstable subspace and the
path $\tau\mapsto E^u(-\tau)$. Using~\eqref{eq:index-formula-1-of-2},
Definition~\ref{def:hormander index} and~\eqref{eq:the Hormander index computed by triple index}, one
obtains
\[
\iMor(u,L_0,-)
=
\iota^-_{L_0}(u)
+
\iota\bigl(E_0^{u}(-\infty),L_0;L_D\bigr),
\]
which is~\eqref{eq:Morse Maslov -}. \qed

\subsection{Proof of Corollary~\ref{cor:morse index between general boundary and Dirichlet boundary}}

We first prove~\eqref{eq:+morse index between general boundary and Dirichlet boundary}.  
Using~\eqref{eq:Morse Maslov+}, Definition~\ref{def:hormander index} and
\eqref{eq:the Hormander index computed by triple index},
\begin{align}
\iMor(u,L_0,+)-\iMor(u,L_D,+ )
&=  \iota_{L_0}^+(u)+\iota\big( L_D,L_0,E^s_0(+\infty) \big)-\iota^+_{L_D}(u)\\
&= -\Big(  \iCLM\big(E^{s}(t), L_0 ;t\in\R^+\big)-    \iCLM\big(E^{s}(t), L_D ;t\in\R^+\big) \Big)
\\[-2mm]
&\qquad +\iota\big( L_D,L_0,E^s_0(+\infty) \big) \\[1mm]
&= -s\big( L_D,L_0,E^s_0(0),E^s_0(+\infty) \big)+\iota\big( L_D,L_0,E^s_0(+\infty) \big)\\
&= -\iota\big( L_D,L_0,E^s_0(+\infty) \big)+\iota\big( L_D,L_0,E^s_0(0) \big)\\
&\qquad +\iota\big( L_D,L_0,E^s_0(+\infty) \big)\\
&= \iota\big( L_D,L_0,E^s_0(0) \big),
\end{align}
which is~\eqref{eq:+morse index between general boundary and Dirichlet boundary}.

For~\eqref{eq:-morse index between general boundary and Dirichlet boundary}, we use
\eqref{eq:Morse Maslov -} and again Definition~\ref{def:hormander index} and
\eqref{eq:the Hormander index computed by triple index}:
\begin{align}
\iMor(u,L_0,-)-\iMor(u,L_D,- )  
&=\iota^-_{ L_0 }(u)+\iota\big(E_0^{u}(-\infty), L_0 ; L_D \big)-\iota^-_{ L_D }(u)\\
&=\iCLM\big( L_D ,E^{u}(-t);t\in\R^+\big)-\iCLM\big( L_0 ,E^{u}(-t);t\in\R^+\big)
\\[-2mm]
&\qquad +\iota\big(E_0^{u}(-\infty), L_0 ; L_D \big)\\[1mm]
&=s\big(E^{u}(0),E^{u}(-\infty); L_0 , L_D \big)+\iota\big(E_0^{u}(-\infty), L_0 ; L_D \big)\\
&=\iota\big(E^{u}(0), L_0 , L_D \big)-\iota\big(E^{u}(-\infty), L_0 ; L_D \big)
\\[-2mm]
&\qquad +\iota\big(E^{u}(-\infty), L_0 ; L_D \big)\\[1mm]
&=\iota\big(E^{u}(0), L_0 , L_D \big),
\end{align}
which is~\eqref{eq:-morse index between general boundary and Dirichlet boundary}. \qed

%
%
%

\section{Some classical examples}

In this section we illustrate the abstract Morse index formulas obtained above on a few
standard one- and multi-dimensional models admitting heteroclinic or halfclinic orbits.

\begin{itemize}
    \item In Subsection~\ref{pendulum} we treat the classical \emph{mathematical pendulum} and
    compute the Morse index of its heteroclinic and halfclinic trajectories.

    \item In Subsection~\ref{nagumo} we consider the scalar Nagumo equation and its
    heteroclinic and halfclinic solutions.

    \item In Subsection~\ref{reaction-diffusion} we study a coupled reaction--diffusion system in
    $\R^4$, showing how the method extends to genuinely higher-dimensional settings.
\end{itemize}

In each case we check that the structural assumptions (L1), (L2), (H1), (H2) are satisfied and
compute the associated Morse and Maslov indices explicitly.

%
%

\subsection{Mathematical pendulum}\label{pendulum}

Consider the one-dimensional pendulum equation
\begin{equation}\label{eq:pendulum}
    \ddot{\theta}(t) = V'(\theta(t)), \qquad
    V(\theta) = \frac{g}{l}\cos\theta,
\end{equation}
with Lagrangian
\[
    L(\theta,\dot\theta)
    =
    \frac12 |\dot\theta|^2 + V(\theta).
\]

\subsubsection*{Heteroclinic solutions}

It is classical (see e.g.\ \cite[Section~5]{BY11}) that \eqref{eq:pendulum} admits a heteroclinic
solution
\[
\widehat{\theta}(t)
=
4\arctan\!\left(\tanh\!\left(\tfrac12 t\sqrt{\tfrac{g}{l}}\right)\right),
\]
connecting $\theta=-\pi$ to $\theta=\pi$.

Linearisation along $\widehat{\theta}$ yields the Morse--Sturm equation
\begin{equation}\label{eq:p.m.s.}
\begin{cases}
- \phi''(t)
+
\displaystyle \frac{g}{l}\Bigl(1-2\,\sech^{2}\!\bigl( \sqrt{\tfrac{g}{l}}\, t \bigr)\Bigr)\phi(t)=0,\\[4pt]
\phi(t)\to 0 \quad\text{as } t\to\pm\infty,
\end{cases}
\end{equation}
with coefficients
\[
P(t)=1,\qquad Q(t)=0,\qquad
R(t)=\frac{g}{l}\Bigl(1-2\sech^{2}\!\bigl(\sqrt{\tfrac{g}{l}}t\bigr)\Bigr),\qquad
R_\pm=\frac{g}{l}.
\]
In particular, (L1), (L2), (H1), (H2) hold.

Setting $\Phi(t)=(\dot\phi(t),\phi(t))^T$, \eqref{eq:p.m.s.} is equivalent to
\begin{equation}\label{eq:pendulum.h.s.}
\begin{cases}
\dot{\Phi}(t)=J B(t)\Phi(t), \\[4pt]
\Phi(t)\to 0 \quad\text{as } t\to\pm\infty,
\end{cases}
\end{equation}
with
\[
B(t)=
\begin{pmatrix}
1 & 0 \\
0 & -\dfrac{g}{l}\Bigl(1-2\sech^{2}(\sqrt{\tfrac{g}{l}} t)\Bigr)
\end{pmatrix}.
\]

Let $E^{s}(\tau)$, $E^{u}(-\tau)$ denote the stable and unstable spaces of
\eqref{eq:pendulum.h.s.} at time $\tau$. By Corollary~\ref{thm:morse index=maslov index},
\[
\iMor(\widehat{\theta})
=
-\,\iCLM\bigl(E^{s}(\tau),E^{u}(-\tau);\tau\in\R^{+}\bigr).
\]

Using the explicit solution $\widehat{\theta}$, one checks that
\[
\binom{\ddot{\widehat{\theta}}(t)}{\dot{\widehat{\theta}}(t)}
\]
solves \eqref{eq:pendulum.h.s.}, and one obtains
\[
E^{s}(\tau)
=
\left\{
\binom{
-\sqrt{\tfrac{g}{l}}\,\tanh\!\left(\sqrt{\tfrac{g}{l}}\,\tau\right)v
}{v}
: v\in\R
\right\},
\qquad
E^{u}(\tau)
=
\left\{
\binom{
-\sqrt{\tfrac{g}{l}}\,\tanh\!\left(\sqrt{\tfrac{g}{l}}\,\tau\right)v
}{v}
: v\in\R
\right\},
\]
and
\[
E^{s}(+\infty)=
\left\{
\binom{-\sqrt{\tfrac{g}{l}}\,v}{v}:v\in\R
\right\},
\qquad
E^{u}(-\infty)=
\left\{
\binom{\sqrt{\tfrac{g}{l}}\,v}{v}:v\in\R
\right\}.
\]

Since $E^{s}(\tau)=E^{u}(\tau)$ for all $\tau$ and both are transversal to $L_D$,
Lemma~\ref{lem: maslov triple index (pair)} yields
\[
\iMor(\widehat{\theta})
=
\iota(E^{u}(0),E^{s}(0);L_{D})
-
\iota(E^{u}(-\infty),E^{s}(+\infty);L_{D}).
\]
A direct computation of the associated quadratic forms $Q$ (see~\eqref{eq:the form Q}) gives
\[
Q(E^{u}(0),E^{s}(0);L_{D})(v,v)=0,\qquad
Q(E^{u}(-\infty),E^{s}(+\infty);L_{D})(v,v)
=
-2\sqrt{\tfrac{g}{l}}\,v^{2},
\]
so both triple indices vanish and
\[
\iMor(\widehat{\theta})=0.
\]

\subsubsection*{Halfclinic solutions}

We view the same trajectory $\widehat{\theta}$ as a future halfclinic on $[0,+\infty)$:
\[
    \widehat{\theta}(t)
    =
    4 \arctan\!\left( \tanh\!\left(\frac{1}{2} t \sqrt{\frac{g}{l}}\right)\right),\qquad
    (\dot{\theta}(0),\theta(0))^T\in L_D,\quad
    \lim_{t\to +\infty}\theta(t)=\pi.
\]
Since $\widehat{\theta}(0)=0$, the boundary condition at $0$ is Dirichlet.

Linearisation gives the half-line problem
\begin{equation}\label{eq:p.m.s.future}
\begin{cases}
-\phi''(t)
+
\displaystyle\frac{g}{l}\Bigl(1-2 \sech^2\bigl( \sqrt{\tfrac{g}{l}}t  \bigr) \Bigr)\phi(t) = 0,\\[4pt]
(\dot\phi(0),\phi(0))^T\in L_D,\qquad 
\lim_{t\to +\infty}\phi(t)=0.
\end{cases}
\end{equation}
With $\Phi(t)=(\dot\phi(t),\phi(t))^T$ this is equivalent to
\begin{equation}\label{eq:pendulum.h.s.future}
\begin{cases}
\dot{\Phi}(t)=J B(t)\Phi(t),\\[4pt]
\Phi(0)\in L_D,\qquad \displaystyle\lim_{t\to+\infty}\Phi(t)=0,
\end{cases}
\end{equation}
with the same $B(t)$ as above. All assumptions (L1), (L2), (H1), (H2) hold.

By Theorem~\ref{thm:morse index formula for halfclinic} with $L_0=L_D$,
\[
\iMor(\widehat{\theta},L_D,+)
=
-\,\iCLM\bigl(E^s(\tau),L_D;\tau\in\R^+\bigr),
\]
since the Hörmander term vanishes when the two reference Lagrangians coincide.

From the above formulas,
\[
E^s(\tau)
=
\left\{
\binom{
-\sqrt{\tfrac{g}{l}}\tanh\!\left( \sqrt{\tfrac{g}{l}} \,\tau  \right) v
}{v}
: v\in\R
\right\},
\qquad
E^s(+\infty)
=
\left\{
\binom{-\sqrt{\tfrac{g}{l}}\,v}{v}: v\in\R
\right\}.
\]
For every $\tau>0$ we have $E^s(\tau)\cap L_D=\{0\}$, hence
\[
\iCLM\bigl(E^s(\tau),L_D;\tau\in\R^+\bigr)=0
\quad\Rightarrow\quad
\iMor(\widehat{\theta},L_D,+)=0.
\]

Now let
\[
L_0
=
\left\{
\binom{a v}{v} : v\in\R
\right\},\qquad a\in\R,
\]
be a general Lagrangian line, corresponding to the Robin condition $\dot\phi(0)=a\,\phi(0)$.

By Corollary~\ref{cor:morse index between general boundary and Dirichlet boundary},
\[
\iMor(\widehat{\theta},L_0,+)
=
\iota(L_D,L_0;E^s(0)).
\]
At $\tau=0$,
\[
E^s(0)
=
\left\{
\binom{0}{v}:v\in\R
\right\}
=L_N.
\]
For $V\in L_D\cap(L_0+E^s(0))$ we can write $V=(av,0)^T$ and decompose
\[
V
=
\binom{a v}{v} + \binom{0}{-v}
=:Y+Z,\quad
Y\in L_0,\ Z\in E^s(0).
\]
The quadratic form $Q$ of the triple $(L_D,L_0,E^s(0))$ satisfies
\[
Q(L_D,L_0;E^s(0))(V,V)
=
\omega(Y,Z)
=
- a v^2.
\]
Hence
\[
\iMor(\widehat{\theta},L_0,+)
=
\iota(L_D,L_0;E^s(0))
=
\begin{cases}
1,& a<0,\\[4pt]
0,& a\ge 0.
\end{cases}
\]

\begin{prop}
Let 
\[
\widehat{\theta}(t)
=
4 \arctan\!\left( \tanh\!\left(\tfrac{1}{2} t \sqrt{\tfrac{g}{l}}\right)\right)
\]
be the heteroclinic solution of~\eqref{eq:pendulum} connecting $-\pi$ to $\pi$. Then
\[
\iMor(\widehat{\theta}) = 0.
\]
Viewed on $[0,+\infty)$ as a future halfclinic with boundary
\[
(\dot{\theta}(0),\theta(0))^T\in L_0,\qquad
L_0
=
\left\{
\binom{a v}{v}: v\in\R
\right\},
\]
the Morse index is
\[
\iMor(\widehat{\theta},L_0,+)
=
\begin{cases}
1, & a<0,\\[4pt]
0, & a\ge 0.
\end{cases}
\]
\end{prop}

\subsection{Nagumo equation}\label{nagumo}

Consider the scalar Nagumo equation
\begin{equation}\label{eq:nagumo eq.}
u_t = u_{xx} + u(u-a)(1-u), \qquad -1 \le a \le 1,
\end{equation}
and, for $a=\frac12$, the steady equation
\begin{equation}\label{eq:nagumo steady equation}
u_{xx} + u\left(u-\tfrac12\right)(1-u) = 0.
\end{equation}

\subsubsection*{Heteroclinic solutions}

For $a=\frac12$ there is a monotone heteroclinic
\[
\widehat{u}(x)
=
\frac12 + \frac12 \tanh\!\left( \frac{\sqrt{2}}{4}x\right),
\qquad x\in\R,
\]
connecting $0$ to $1$ (see \cite{KT83}).

Linearisation of \eqref{eq:nagumo steady equation} at $\widehat{u}$ gives
\begin{align}\label{eq:1.r.d.eq.m.s}
\begin{cases}
- w''(x)
+
\Bigl(
\frac{3}{4}\tanh^2\!\left(\frac{\sqrt{2}}{4}x\right)
-\frac{1}{4}
\Bigr) w(x)
= 0, \quad x\in\R,\\[4pt]
w(x)\to 0 \quad \text{as } x\to\pm\infty,
\end{cases}
\end{align}
with
\[
P(x)=1,\quad Q(x)=0,\quad
R(x)=\frac{3}{4}\tanh^2\!\left(\frac{\sqrt{2}}{4}x\right)-\frac{1}{4},\quad
R_\pm=\frac12.
\]
Thus (L1), (L2), (H1), (H2) hold.

Setting $W(x)=(\dot w(x),w(x))^T$, we obtain the Hamiltonian system
\begin{align}\label{eq:Nagumo h.s.eq.}
\begin{cases}
\dot{W}(x) = J B(x) W(x),\\[4pt]
W(x)\to 0 \quad \text{as } x\to\pm\infty,
\end{cases}
\end{align}
where
\[
B(x)=
\begin{pmatrix}
1 & 0\\[2pt]
0 & \dfrac{3}{4}\tanh^2\!\left(\dfrac{\sqrt{2}}{4}x\right) - \dfrac{1}{4}
\end{pmatrix}.
\]

By Corollary~\ref{thm:morse index=maslov index},
\[
\iMor(\widehat{u})
=
-\,\iCLM\bigl(E^s(\tau),E^u(-\tau); \tau\in\R^+\bigr),
\]
where $\iMor(\widehat{u})$ is the Morse index of
\[
\mathcal A
=
- \frac{d^2}{dx^2} 
+ \frac{3}{4}\tanh^2\!\left(\frac{\sqrt{2}}{4}x\right) - \frac{1}{4}.
\]

By translation invariance, $w=\widehat{u}'$ solves $\mathcal A w=0$. Up to a constant factor,
\[
W(x)
=
\binom{\dot w(x)}{w(x)}
=
\begin{pmatrix}
-\tanh\!\left(\dfrac{\sqrt{2}}{4}x\right)\sech^{2}\!\left(\dfrac{\sqrt{2}}{4}x\right)\\[4pt]
\sqrt{2}\,\sech^{2}\!\left(\dfrac{\sqrt{2}}{4}x\right)
\end{pmatrix},
\]
and $W(x)\to 0$ as $x\to\pm\infty$. Hence for each $\tau$,
\[
E^s(\tau)
=
\left\{
\binom{-\tanh\!\left(\tfrac{\sqrt{2}}{4}\tau\right)v}{\sqrt{2}\,v}
: v\in\R
\right\}
=
E^u(\tau),
\]
and
\[
E^s(+\infty)
=
\left\{
\binom{-v}{\sqrt{2}\,v}:v\in\R
\right\},
\qquad
E^u(-\infty)
=
\left\{
\binom{v}{\sqrt{2}\,v}:v\in\R
\right\}.
\]

For all $\tau>0$ we have $E^s(\tau)\cap L_D = E^u(-\tau)\cap L_D = \{0\}$, so
\[
\iMor(\widehat{u})
=
\iota\bigl(E^u(0),E^s(0);L_D\bigr)
-
\iota\bigl(E^u(-\infty),E^s(+\infty);L_D\bigr).
\]
At $\tau=0$,
\[
E^s(0)=E^u(0)
=
\left\{
\binom{0}{\sqrt{2}\,v}:v\in\R
\right\}=L_N,
\]
and one finds
\[
Q(E^u(0),E^s(0);L_D)=0.
\]
At infinity,
\[
E^u(-\infty)
=
\left\{\binom{\tfrac{1}{\sqrt{2}}q}{q}:q\in\R\right\},
\quad
E^s(+\infty)
=
\left\{\binom{-\tfrac{1}{\sqrt{2}}q}{q}:q\in\R\right\},
\]
and
\[
Q(E^u(-\infty),E^s(+\infty);L_D)(v,v)
=
-2\sqrt{2}\,v^2.
\]
Thus both triple indices vanish and
\[
\iMor(\widehat{u})=0.
\]

\subsubsection*{Halfclinic solutions of the Nagumo equation}

Consider now the halfclinic problem
\begin{align}\label{eq:nagumo bdv}
\begin{cases}
u_{xx} + u\bigl(u-\tfrac{1}{2}\bigr)(1-u) = 0,\\[4pt]
(\dot u(0),u(0))^T\in L_0,\qquad \displaystyle\lim_{x\to+\infty}u(x)=1,
\end{cases}
\end{align}
with
\[
L_0 = \left\{ \binom{v}{2\sqrt{2}\,v} : v\in\R \right\}.
\]
Then
\[
\widehat{u}(x)
=
\frac{1}{2}
+
\frac{1}{2}\tanh\!\left(\frac{\sqrt{2}}{4}x\right),\qquad x\ge0,
\]
solves \eqref{eq:nagumo bdv}. Linearisation on $\R^+$ gives
\begin{align}\label{eq:nagumo m.s.halfclinic}
\begin{cases}
- w''(x)
+
\Bigl(
\frac{3}{4}\tanh^2\!\left(\dfrac{\sqrt{2}}{4}x\right)
-\frac{1}{4}
\Bigr)w(x)
=0,\\[4pt]
(\dot w(0),w(0))^T\in L_0,\qquad
\displaystyle\lim_{x\to+\infty}w(x)=0.
\end{cases}
\end{align}
As before, (L1), (L2), (H1), (H2) hold.

Let $W(x)=(\dot w(x),w(x))^T$. Then \eqref{eq:nagumo m.s.halfclinic} is equivalent to
\begin{align}\label{eq:Nagumo h.s.eq. half}
\begin{cases}
\dot W(x) = J B(x) W(x),\qquad x\ge0,\\[4pt]
W(0)\in L_0,\qquad \displaystyle\lim_{x\to+\infty}W(x)=0,
\end{cases}
\end{align}
with the same $B(x)$ as in~\eqref{eq:Nagumo h.s.eq.}. The stable spaces $E^s(\tau)$ are the same as
above.

For a general Lagrangian line
\[
L_1 = \left\{\binom{a v}{v}:v\in\R\right\},\qquad a\in\R,
\]
Theorem~\ref{thm:morse index formula for halfclinic} gives
\[
\iMor(\widehat{u},L_1,+)
=
-\,\iCLM\bigl(E^s(\tau),L_1;\tau\in\R^+\bigr)
+
\iota(L_D,L_1;E^s(+\infty)).
\]
Using Corollary~\ref{cor:morse index between general boundary and Dirichlet boundary} and the fact that
$\iCLM(E^s(\tau),L_D;\tau\in\R^+)=0$, we obtain
\[
\iMor(\widehat{u},L_1,+)
=
\iota(L_D,L_1;E^s(0)).
\]

At $\tau=0$,
\[
E^s(0)
=
\left\{\binom{0}{\sqrt{2}\,v}:v\in\R\right\}
=L_N.
\]
As before, any $V\in L_D\cap(L_1+E^s(0))$ can be written as $V=(a v,0)^T$ and decomposed as
\[
V
=
\binom{a v}{v} + \binom{0}{-v},
\]
with the quadratic form
\[
Q(L_D,L_1;E^s(0))(V,V)
=
\omega\!\left(\binom{a v}{v},\binom{0}{-v}\right)
=
- a v^2.
\]
Hence
\[
\iMor(\widehat{u},L_1,+)
=
\begin{cases}
1, & a<0,\\[4pt]
0, & a\ge 0.
\end{cases}
\]

\begin{prop}
Let 
\[
\widehat{u}(x)
=
\frac{1}{2}
+
\frac{1}{2}\tanh\!\left(\frac{\sqrt{2}}{4}x\right)
\]
be the heteroclinic solution of~\eqref{eq:nagumo steady equation}. Then
\[
\iMor(\widehat{u})=0.
\]
Viewed on $[0,+\infty)$ as a future halfclinic solution with boundary
\[
(\dot u(0),u(0))^T\in L_1,\qquad
L_1
=
\left\{\binom{a v}{v}:v\in\R\right\},
\]
the Morse index satisfies
\[
\iMor(\widehat{u},L_1,+)
=
\begin{cases}
1, & a<0,\\[4pt]
0, & a\ge 0.
\end{cases}
\]
\end{prop}

\subsection{Coupled reaction--diffusion system in dimension four}\label{reaction-diffusion}

Consider the coupled reaction--diffusion system
\begin{align}\label{eq:r.d.eq}
\begin{cases}
u_t = u_{xx} + u\bigl(u-\tfrac{1}{2}\bigr)(1-u) + c(u-v),\\[4pt]
v_t = v_{xx} + v\bigl(v-\tfrac{1}{2}\bigr)(1-v) + c(v-u),
\end{cases}
\end{align}
with $c\in\R$ and $c<\frac14$. The steady equation can be written as
\begin{equation}\label{eq:2ndhs}
w''(x) = \nabla V(w(x)), \qquad w=(u,v)^T,
\end{equation}
where
\[
V(w)
=
\frac{1}{4}(u^4+v^4)
-\frac{1}{2}(u^3+v^3)
+\frac{1}{4}(u^2+v^2)
-\frac{c}{2}(u-v)^2,
\]
and
\[
L(w,w')
=
\frac{1}{2}|w'|^2 + V(w)
\]
is the associated Lagrangian.

Equation~\eqref{eq:2ndhs} admits the heteroclinic solution
\[
\widehat{w}(x)
=
\begin{pmatrix}
\widehat{u}(x)\\[2pt]
\widehat{u}(x)
\end{pmatrix}
=
\begin{pmatrix}
\dfrac{1}{2}+\dfrac{1}{2}\tanh\!\left(\dfrac{\sqrt{2}}{4}x\right)\\[4pt]
\dfrac{1}{2}+\dfrac{1}{2}\tanh\!\left(\dfrac{\sqrt{2}}{4}x\right)
\end{pmatrix},
\]
connecting $(0,0)^T$ to $(1,1)^T$. For $c<\frac14$ both rest points are hyperbolic.

Linearisation at $\widehat{w}$ leads to
\begin{equation}\label{eq:r.d.eq.ms}
\begin{cases}
- w''(x)
+
D^2V(\widehat{w}(x))\,w(x)
=0, \quad x\in\R,\\[4pt]
w(x)\to 0 \quad\text{as } x\to\pm\infty,
\end{cases}
\end{equation}
with
\[
D^2V(\widehat{w}(x))
=
\begin{pmatrix}
\frac{3}{4}\tanh^2\!\left(\dfrac{\sqrt{2}}{4}x\right) - \frac{1}{4} - c & c\\[4pt]
c & \frac{3}{4}\tanh^2\!\left(\dfrac{\sqrt{2}}{4}x\right) - \frac{1}{4} - c
\end{pmatrix},
\]
and
\[
R_\pm =
\begin{pmatrix}
\frac{1}{2}-c & c\\[4pt]
c & \frac{1}{2}-c
\end{pmatrix}.
\]
Thus (L1), (L2), (H1), (H2) are satisfied.

Let $W(x)=(\dot w(x),w(x))^T$. Then \eqref{eq:r.d.eq.ms} is equivalent to
\begin{align}\label{eq:coupled.r.d.eq.h.s.}
\begin{cases}
\dot{W}(x) = J B(x) W(x),\\[4pt]
W(x)\to 0 \quad\text{as } x\to\pm\infty,
\end{cases}
\end{align}
with
\[
B(x)
=
\begin{pmatrix}
\Id & 0\\[2pt]
0 & -D^2V(\widehat{w}(x))
\end{pmatrix}.
\]
By Corollary~\ref{thm:morse index=maslov index},
\[
\iMor(\widehat{w})
=
-\iCLM\bigl(E^s(\tau),E^u(-\tau);\tau\in\R^+\bigr),
\]
where $\iMor(\widehat{w})$ is the Morse index of
\[
\mathscr{A}
=
- \frac{d^2}{dx^2}
+
D^2V(\widehat{w}(x))
\]
on $W^{2,2}(\R,\R^2)$.

\subsubsection*{Spectral decomposition}

To analyse the spectrum of $\mathscr{A}$, introduce
\[
T
=
\begin{pmatrix}
1 & -1\\[2pt]
1 & 1
\end{pmatrix},
\qquad
T^{-1}
=
\frac{1}{2}
\begin{pmatrix}
1 & 1\\[2pt]
-1 & 1
\end{pmatrix},
\]
and set $z = T^{-1}w$. One checks that
\[
\widehat{\mathscr{A}}
:=
T^{-1}\mathscr{A}T
=
- \frac{d^2}{dx^2}
+
\begin{pmatrix}
\displaystyle\frac{3}{4}\tanh^2\!\left(\dfrac{\sqrt{2}}{4}x\right) - \frac{1}{4}
& 0\\[8pt]
0 &
\displaystyle\frac{3}{4}\tanh^2\!\left(\dfrac{\sqrt{2}}{4}x\right) - \frac{1}{4} -2c
\end{pmatrix}.
\]
Thus $\mathscr{A}$ and $\widehat{\mathscr{A}}$ are unitarily equivalent and
\[
\widehat{\mathscr{A}} = \mathscr{L}_1 \oplus \mathscr{L}_2,
\]
with
\begin{align*}
\mathscr{L}_1
&=
- \frac{d^2}{dx^2}
+
\Bigl(\frac{3}{4}\tanh^2\!\left(\frac{\sqrt{2}}{4}x\right) - \frac{1}{4}\Bigr),\\[2pt]
\mathscr{L}_2
&=
- \frac{d^2}{dx^2}
+
\Bigl(\frac{3}{4}\tanh^2\!\left(\frac{\sqrt{2}}{4}x\right) - \frac{1}{4} -2c\Bigr).
\end{align*}

The scalar spectral problem
\[
-\phi''(x)
+
\Bigl(\frac{3}{4}\tanh^2\!\left(\frac{\sqrt{2}}{4}x\right) - \frac{1}{4}\Bigr)\phi(x)
=
\lambda \phi(x)
\]
is of Pöschl--Teller type. The function
\[
\phi_0(x) = 1 - \tanh^2\!\left(\frac{\sqrt{2}}{4}x\right)
=
\sech^2\!\left(\frac{\sqrt{2}}{4}x\right)
\]
is an eigenfunction with $\lambda=0$, and the potential tends to $\frac12$ at infinity, so the
essential spectrum lies in $[\frac12,+\infty)$. A standard analysis shows that $\lambda=0$ is the
unique eigenvalue of $\mathscr{L}_1$.

Since
\[
\mathscr{L}_2 = \mathscr{L}_1 - 2c,
\]
the eigenvalues of $\mathscr{L}_2$ are of the form $\lambda_k - 2c$. As $\mathscr{L}_1$ has only
$\lambda_0=0$, $\mathscr{L}_2$ has the single eigenvalue $-2c$, with eigenfunction $\phi_0$.

Thus:
\begin{itemize}
\item If $c\le 0$, then $-2c\ge 0$ and both eigenvalues of $\widehat{\mathscr{A}}$ are non-negative.
\item If $0<c<\frac{1}{4}$, then $-2c<0$ and $\widehat{\mathscr{A}}$ has exactly one negative eigenvalue
$-2c$.
\end{itemize}

Therefore
\[
\iMor(\widehat{\mathscr{A}})
=
\begin{cases}
0, & c\le 0,\\[4pt]
1, & 0<c<\dfrac{1}{4}.
\end{cases}
\]
Since $\mathscr{A}$ and $\widehat{\mathscr{A}}$ are equivalent, they have the same Morse index, and
\[
\iMor(\widehat{w})
=
\begin{cases}
0, & c\le 0,\\[4pt]
1, & 0<c<\dfrac{1}{4}.
\end{cases}
\]

\begin{prop}
Let
\[
\widehat{w}(x)
=
\begin{pmatrix}
\dfrac{1}{2}+\dfrac{1}{2}\tanh\!\left(\dfrac{\sqrt{2}}{4}x\right)\\[4pt]
\dfrac{1}{2}+\dfrac{1}{2}\tanh\!\left(\dfrac{\sqrt{2}}{4}x\right)
\end{pmatrix}
\]
be the heteroclinic solution of~\eqref{eq:r.d.eq}. Then
\[
\iMor(\widehat{w})
=
\begin{cases}
0, & c\le 0,\\[4pt]
1, & 0<c<\dfrac{1}{4}.
\end{cases}
\]
\end{prop}


\appendix

\section{Maslov, H\"ormander and triple index}\label{sec:maslov}

This appendix collects the basic definitions and properties of the Maslov–CLM index, the
H\"ormander index and the triple index used throughout the paper.  
Our main references are \cite{CLM94,RS93,HP17,ZWZ18}.

\subsection{The Cappell–Lee–Miller index}\label{subsec:Maslov}

Let $(\R^{2n},\omega)$ be the standard symplectic space and $\Lagr(n)$ its Lagrangian
Grassmannian. For $a<b$ we denote by $\P([a,b];\R^{2n})$ the space of ordered pairs
\[
L:[a,b]\to\Lagr(n)\times\Lagr(n),\qquad
t\longmapsto L(t)=(L_1(t),L_2(t)),
\]
endowed with the compact–open topology.

Following Cappell–Lee–Miller \cite{CLM94}, we recall the Maslov index for pairs of Lagrangian
paths.

\begin{defn}
The \emph{Cappell–Lee–Miller index} (CLM index) is the unique map
\[
\iCLM:\P([a,b];\R^{2n})\to\Z,\qquad
L\longmapsto\iCLM\bigl(L(t);t\in[a,b]\bigr),
\]
satisfying axioms (I)–(VI) in \cite[Section~1]{CLM94}.
\end{defn}

Loosely speaking, for a pair $L=(L_1,L_2)$ the integer $\iCLM(L_1,L_2)$ counts, with signs and
multiplicities, the instants $t\in[a,b]$ such that $L_1(t)\cap L_2(t)\neq\{0\}$.

We will use repeatedly the following basic properties.

\begin{itemize}
\item[]\textbf{(Reversal)}
Let $L=(L_1,L_2)\in\P([a,b];\R^{2n})$ and define
\[
\widehat{L}(t)=(L_1(-t),L_2(-t)),\qquad t\in[-b,-a].
\]
Then
\[
\iCLM\bigl(\widehat{L}(t);t\in[-b,-a]\bigr)
=
-\iCLM\bigl(L(t);t\in[a,b]\bigr).
\]

\item[]\textbf{(Symplectic invariance)}
Let $L=(L_1,L_2)\in\P([a,b];\R^{2n})$ and
$\phi\in\mathscr C^0([a,b],\Sp(2n,\R))$. Then
\[
\iCLM\bigl(\phi(t)L_1(t),\phi(t)L_2(t);t\in[a,b]\bigr)
=
\iCLM\bigl(L_1(t),L_2(t);t\in[a,b]\bigr).
\]
\end{itemize}

Together with fixed-endpoint homotopy invariance, these properties are the main tools in the
applications to Hamiltonian systems and Morse–Maslov index theorems used in the paper.

\subsection{Triple index and H\"ormander index}\label{subsec:Hor_index}

We now recall the triple index and the H\"ormander index, and their relation.  
Our main reference is \cite{ZWZ18}.

Let $\alpha,\beta,\delta$ be isotropic subspaces of $(\R^{2n},\omega)$.  
Define the quadratic form
\begin{equation}\label{eq:the form Q}
Q=Q(\alpha,\beta;\delta):\alpha\cap(\beta+\delta)\to\R
\end{equation}
by
\[
Q(x_1,x_2)=\omega(y_1,z_2),
\]
where $x_j\in\alpha\cap(\beta+\delta)$ is decomposed as $x_j=y_j+z_j$ with $y_j\in\beta$,
$z_j\in\delta$ ($j=1,2$).  
This is well defined; see \cite[Lemma~3.3]{ZWZ18}.

If $\alpha,\beta,\delta$ are Lagrangian, then \cite[Lemma~3.3]{ZWZ18} shows
\begin{equation}\label{Qker}
\ker Q(\alpha,\beta;\delta)
=
\alpha\cap\beta+\alpha\cap\delta.
\end{equation}

\begin{defn}\label{lem:defi:triple index}
Let $\alpha,\beta,\kappa\in\Lagr(n)$.  
The \emph{triple index} of $(\alpha,\beta,\kappa)$ is
\begin{equation}\label{eq:the triple index}
\iota(\alpha,\beta,\kappa)
=
\iMor\bigl(Q(\alpha,\delta;\beta)\bigr)
+
\iMor\bigl(Q(\beta,\delta;\kappa)\bigr)
-
\iMor\bigl(Q(\alpha,\delta;\kappa)\bigr),
\end{equation}
where $\delta\in\Lagr(n)$ is any Lagrangian satisfying
\[
\delta\cap\alpha=\delta\cap\beta=\delta\cap\kappa=\{0\}.
\]
The right-hand side is independent of the choice of $\delta$ \cite{ZWZ18}.
\end{defn}

A more direct expression is given in \cite[Lemma~3.13]{ZWZ18}:

\begin{equation}\label{eq:trip1}
\iota(\alpha,\beta,\kappa)
=
m^{+}\bigl(Q(\alpha,\beta;\kappa)\bigr)
+
\dim(\alpha\cap\kappa)
-
\dim(\alpha\cap\beta\cap\kappa),
\end{equation}
where $m^+(\cdot)$ denotes the positive index of inertia.

\medskip

We now turn to the H\"ormander index, which measures the change of Maslov index when the
reference Lagrangian is varied.

Let $V_0,V_1,L_0,L_1\in\Lagr(n)$ and
\[
L\in\mathscr C^0([0,1],\Lagr(n)),\quad L(0)=L_0,\;L(1)=L_1,
\]
\[
V\in\mathscr C^0([0,1],\Lagr(n)),\quad V(0)=V_0,\;V(1)=V_1.
\]

\begin{defn}\label{def:hormander index} 
The \emph{H\"ormander index} is
\begin{align*}
s(L_0,L_1;V_0,V_1)
&=
\iCLM\bigl(V_1,L(t);t\in[0,1]\bigr)
-
\iCLM\bigl(V_0,L(t);t\in[0,1]\bigr)\\
&=
\iCLM\bigl(V(t),L_1;t\in[0,1]\bigr)
-
\iCLM\bigl(V(t),L_0;t\in[0,1]\bigr).
\end{align*}
\end{defn}

By homotopy invariance, this does not depend on the particular paths $L,V$ connecting the
endpoints; see \cite{RS93}.

Let now $\lambda_1,\lambda_2,\kappa_1,\kappa_2\in\Lagr(n)$.  
By \cite[Theorem~1.1]{ZWZ18},
\begin{equation}\label{eq:the Hormander index computed by triple index}
s(\lambda_1,\lambda_2;\kappa_1,\kappa_2)
=
\iota(\lambda_1,\lambda_2,\kappa_2)
-
\iota(\lambda_1,\lambda_2,\kappa_1)
=
\iota(\lambda_1,\kappa_1,\kappa_2)
-
\iota(\lambda_2,\kappa_1,\kappa_2).
\end{equation}

\begin{lem}[\cite{HWY18}]\label{lem:Maslov and triple index} 
Let $L_0,L\in\Lagr(n)$ and $l\in\mathscr C^0([0,1],\Lagr(n))$.  
Assume $l(t)\cap L=\{0\}$ for all $t\in[0,1]$. Then
\[
\iCLM\bigl(L_0,l(t);t\in[0,1]\bigr)
=
\iota\bigl(l(1),L_0;L\bigr)
-
\iota\bigl(l(0),L_0;L\bigr).
\]
\end{lem}

The triple index is symplectically invariant: for $\alpha,\beta,\kappa\in\Lagr(n)$ and
$\phi\in\Sp(2n,\R)$,
\[
\iota(\phi\alpha,\phi\beta,\phi\kappa)=\iota(\alpha,\beta,\kappa).
\]
Using this, Lemma~\ref{lem:Maslov and triple index} extends to pairs of Lagrangian paths.

\begin{lem}\label{lem: maslov triple index (pair)} 
Let $l_1,l_2\in\mathscr C^0([0,1],\Lagr(n))$ and let $L\in\Lagr(n)$ be such that
\[
l_1(t)\cap L = l_2(t)\cap L = \{0\} \qquad\forall\,t\in[0,1].
\]
Then
\[
\iCLM\bigl(l_1(t),l_2(t);t\in[0,1]\bigr)
=
\iota\bigl(l_2(1),l_1(1);L\bigr)
-
\iota\bigl(l_2(0),l_1(0);L\bigr).
\]
\end{lem}

\begin{proof}
Write $\R^{2n}=L\oplus JL$ and choose symmetric matrices $M(t),N(t)$ such that
\[
l_1(t)
=
\left\{\begin{pmatrix}M(t)v\\ v\end{pmatrix}:v\in\R^n\right\},
\quad
l_2(t)
=
\left\{\begin{pmatrix}N(t)v\\ v\end{pmatrix}:v\in\R^n\right\}.
\]
Define
\[
T(t)
=
\begin{pmatrix}
\Id & M(0)-M(t)\\[1pt]
0 & \Id
\end{pmatrix}\in\Sp(2n,\R).
\]
Then
\[
T(t)l_1(t)
=
\left\{\begin{pmatrix}M(0)v\\ v\end{pmatrix}:v\in\R^n\right\},
\quad
T(t)l_2(t)
=
\left\{\begin{pmatrix}(N(t)-M(t)+M(0))v\\ v\end{pmatrix}:v\in\R^n\right\}.
\]
By symplectic invariance of $\iCLM$ and Lemma~\ref{lem:Maslov and triple index},
\begin{align*}
\iCLM\bigl(l_1(t),l_2(t);t\in[0,1]\bigr)
&=
\iCLM\bigl(T(t)l_1(t),T(t)l_2(t);t\in[0,1]\bigr)\\
&=
\iota\bigl(T(1)l_2(1),T(1)l_1(1);L\bigr)
-
\iota\bigl(T(0)l_2(0),T(0)l_1(0);L\bigr).
\end{align*}
Since $T(0)=\Id$ and $T(1)$ is symplectic, the triple index is invariant under $T(1)$, and the
claim follows.
\end{proof}
\begin{rem}[Interpretation in the ODE examples]\label{rem:interpretation_examples}
For the Hamiltonian systems obtained by linearising the heteroclinic and 
halfclinic orbits (pendulum, Nagumo equation, and the coupled reaction–diffusion 
model), the abstract symplectic indices have the following concrete meanings.

\begin{enumerate}
\item 
The Maslov–CLM index of the pair $\bigl(E^{s}(\tau),E^{u}(-\tau)\bigr)$ counts, 
with sign, the times at which the stable and unstable directions of the 
linearised flow cease to be transverse.  
Such crossings correspond to zero eigenfunctions of the associated 
Sturm–Liouville operator satisfying the decay conditions at $\pm\infty$.

\item The H\"ormander index compares Maslov indices computed with respect to different boundary
Lagrangians (Dirichlet, Neumann, or general selfadjoint boundary conditions).  
This is the key tool in expressing differences of Morse indices for different boundary conditions.

\item The triple index allows one to express the CLM index in terms of algebraic data at the
endpoints, via the quadratic form $Q(\cdot,\cdot;\cdot)$ and formula \eqref{eq:trip1}.  
In the examples of Sections~\ref{pendulum}, \ref{nagumo} and \ref{reaction-diffusion}, this
reduces the Morse index computation to the definiteness of simple $1\times1$ or $2\times2$
matrices associated with the limiting Lagrangian subspaces
\[
E^u(-\infty),\quad E^s(+\infty),\quad L_D,\quad L_0.
\]
\end{enumerate}
\end{rem}

\section{Spectral flow for paths of selfadjoint Fredholm operators}\label{sec:spectral-flow}

Let $(\mathcal H,(\cdot,\cdot))$ be a real separable Hilbert space and denote by
$\cfsa(\mathcal H)$ the space of closed, selfadjoint Fredholm operators on $\mathcal H$,
equipped with the gap topology.  

For $T\in\cfsa(\mathcal H)$ and $a<b$ outside the spectrum of $T$, let
\[
P_{[a,b]}(T)
=
\frac{1}{2\pi i}\int_\gamma (\lambda I - T^{\mathbb C})^{-1}\,d\lambda,
\]
where $\gamma$ is the positively oriented circle centred at $\tfrac{a+b}{2}$ with radius
$\tfrac{b-a}{2}$.  
If $[a,b]$ contains only isolated eigenvalues of $T$ of finite multiplicity, then
\[
\im P_{[a,b]}(T)
=
E_{[a,b]}(T)
=
\bigoplus_{\lambda\in[a,b]}\ker(\lambda\Id-T).
\]

Let $\mathcal A:[a,b]\to\cfsa(\mathcal H)$ be continuous.  
By \cite[Proposition~2.10]{BLP05}, for each $t_0\in[a,b]$ there exist $a_{t_0}>0$ and a
neighbourhood $\mathscr N_{t_0}\subset\cfsa(\mathcal H)$ of $\mathcal A(t_0)$ such that
$\pm a_{t_0}\notin\sigma(T)$ for all $T\in\mathscr N_{t_0}$ and
\[
T\longmapsto P_{[-a_{t_0},a_{t_0}]}(T)
\]
is continuous on $\mathscr N_{t_0}$.  
In particular, $\dim E_{[-a_{t_0},a_{t_0}]}(T)$ is constant on $\mathscr N_{t_0}$.

Choosing a finite partition $a=t_0<t_1<\cdots<t_N=b$ so that
$\mathcal A([t_{i-1},t_i])\subset\mathscr N_{t_i}$ for suitable $a_i>0$, the dimensions
$\dim E_{[-a_i,a_i]}(\mathcal A_t)$ are constant on each $[t_{i-1},t_i]$.

\begin{defn}\label{def:spectral-flow-unb}
The \emph{spectral flow} of $\mathcal A$ on $[a,b]$ is
\[
\Sf{\mathcal A_\lambda;\lambda\in[a,b]}
=
\sum_{i=1}^N
\bigl(
  \dim E_{[0,a_i]}(\mathcal A_{t_i})
  -
  \dim E_{[0,a_i]}(\mathcal A_{t_{i-1}})
\bigr)\in\Z.
\]
\end{defn}

Informally, this is the net number of eigenvalues of $\mathcal A_\lambda$ crossing $0$ from
negative to positive as $\lambda$ increases from $a$ to $b$.

\medskip

\subsection*{Crossing forms}

Assume now that $\mathcal A$ is of class $\mathscr C^1$.  
Let $P_t$ be the orthogonal projection onto $\ker\mathcal A_t$.  
A point $t_0\in[a,b]$ is a \emph{crossing} if $\ker\mathcal A_{t_0}\neq\{0\}$.  
Define the \emph{crossing operator}
\[
\Gamma(\mathcal A,t_0)
:=
P_{t_0}\,\dot{\mathcal A}_{t_0}\,P_{t_0}
:
\ker\mathcal A_{t_0}\to\ker\mathcal A_{t_0}.
\]
A crossing is \emph{regular} if $\Gamma(\mathcal A,t_0)$ is non-degenerate.

If all crossings are regular, they are isolated. Let $\mathcal S$ be the set of crossings and
$\mathcal S_*=\mathcal S\cap(a,b)$.  
Denote by $\sgn(\Gamma)$ the signature of a selfadjoint operator $\Gamma$, i.e.
\[
\sgn(\Gamma)
=
\dim E_{+}(\Gamma)-\dim E_{-}(\Gamma).
\]
Then
\begin{equation}\label{eq:compute sf by crossing form}
\Sf{\mathcal A_t;\,t\in[a,b]}
=
\sum_{t_0\in\mathcal S_*}\sgn(\Gamma(\mathcal A,t_0))
-
\dim E_-(\Gamma(\mathcal A,a))
+
\dim E_+(\Gamma(\mathcal A,b)).
\end{equation}

\begin{rem}\label{rem:regular crossing}
Generic perturbations yield regular crossings: there exists $\varepsilon_0>0$ such that
\[
\Sf{\mathcal{A}_t;\,t\in[a,b]}
=
\Sf{\mathcal{A}_t+\varepsilon I;\,t\in[a,b]}
\qquad\forall\,\varepsilon\in[0,\varepsilon_0],
\]
and for almost all such $\varepsilon$ the path
$t\mapsto\mathcal A_t+\varepsilon I$ has only regular crossings; see \cite{CH07,HP17}.
\end{rem}

\subsection*{Positive curves}

\begin{defn}[\cite{HW18}]\label{def:positive curve}
A continuous path $\mathcal A:[a,b]\to\cfsa(\mathcal H)$ is a \emph{positive curve} if the set
\[
\{\lambda\in[a,b]\mid\ker\mathcal A_\lambda\neq0\}
\]
is finite and
\[
\Sf{\mathcal A_\lambda;\lambda\in[a,b]}
=
\sum_{a<\lambda\le b}\dim\ker\mathcal A_\lambda.
\]
That is, along a positive curve the spectral flow simply counts the total multiplicity of
eigenvalues crossing zero from negative to nonnegative as $\lambda$ increases.
\end{defn}


\section{Hyperbolicity}

In this section we collect several sufficient conditions ensuring the hyperbolicity of the Hamiltonian
matrix associated with a Sturm–Liouville system. We begin with a characterisation of the
hyperbolicity of $JB$ (with $B$ as below) in terms of the non-vanishing of a suitable determinant.

Consider the block matrix
\begin{equation}\label{matrix:B}
B
=
\begin{pmatrix}
P^{-1} & -P^{-1}Q\\[2pt]
-Q^T P^{-1} & Q^T P^{-1} Q - R
\end{pmatrix},
\end{equation}
where $P$ is assumed to be invertible.

\begin{lem}\label{lem:criteria_hypebolic}
Assume that $P$ is invertible. Then the Hamiltonian matrix $JB$ is hyperbolic (i.e.\ it has no purely
imaginary eigenvalues) if and only if
\[
\det\bigl(a^2P + i a(Q^T-Q) + R\bigr)\neq 0
\quad\text{for every } a\in\R.
\]
\end{lem}

\begin{proof}
The matrix $JB$ is hyperbolic if and only if $\det(JB + i a\Id)\neq 0$ for all $a\in\R$.
Using $J^{-1}=-J$ and $\det J=1$, we obtain
\[
\det(JB + i a\Id) = \det(B - i a J),
\]
up to the harmless change $a\mapsto -a$. A direct block computation gives
\begin{align*}
\det(B - i a J)
&=
\det\begin{pmatrix}
P^{-1} & -P^{-1}Q + i a\Id\\[2pt]
-Q^T P^{-1} - i a\Id & Q^T P^{-1}Q - R
\end{pmatrix}\\
&=
\det\!\left[
\begin{pmatrix}
\Id & 0\\[2pt]
(Q^T P^{-1} + i a\Id)P & \Id
\end{pmatrix}
\begin{pmatrix}
P^{-1} & -P^{-1}Q + i a\Id\\[2pt]
-Q^T P^{-1} - i a\Id & Q^T P^{-1}Q - R
\end{pmatrix}
\right]\\
&=
\det\begin{pmatrix}
P^{-1} & -P^{-1}Q + i a\Id\\[2pt]
0 & -i a Q + i a Q^T - a^2 P - R
\end{pmatrix}\\
&=
\det P^{-1}\,\det\bigl(-a^2P - R - i a(Q - Q^T)\bigr).
\end{align*}
Since $\det P^{-1}\neq 0$, we obtain
\[
\det(JB + i a\Id)\neq 0
\quad\Longleftrightarrow\quad
\det\bigl(a^2P + i a(Q^T-Q) + R\bigr)\neq 0,
\]
which proves the claim.
\end{proof}

The next result gives a convenient sufficient condition for hyperbolicity in terms of the positivity
of the block matrix built from the Sturm–Liouville coefficients.

\begin{cor}\label{cor:hyper_positive_definite}
If the block matrix
\[
\begin{pmatrix}
P & Q\\[2pt]
Q^T & R
\end{pmatrix}
\]
is positive definite, then $JB$ is hyperbolic.
\end{cor}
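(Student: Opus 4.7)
The plan is to invoke Lemma~\ref{lem:criteria_hypebolic} and reduce the hyperbolicity of $JB$ to showing that the Hermitian $n\times n$ matrix
\[
M(a) \= a^2 P + ia(\trasp{Q}-Q) + R
\]
is invertible for every $a\in\R$. Since $P$ and $R$ are symmetric and $\trasp{Q}-Q$ is antisymmetric, $M(a)$ is indeed Hermitian, so invertibility will follow from (strict) positivity of $\langle M(a)v,v\rangle$ for all nonzero $v\in\Com^n$.

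The key step is a block-embedding trick. For $v\in\Com^n$, I would introduce the vector $w\=\trasp{[\,iav,\, v\,]} \in \Com^{2n}$ and compute directly
\[
w^*\begin{bmatrix} P & Q \\ \trasp{Q} & R \end{bmatrix} w
= -iav^*(iaPv + Qv) + v^*(ia\trasp{Q}v + Rv)
= a^2 v^* P v + ia\, v^*(\trasp{Q}-Q)v + v^* R v
= \langle M(a)v,v\rangle.
\]
This identity is the heart of the argument; everything else is bookkeeping.

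Now assume the block matrix $\begin{bmatrix} P & Q \\ \trasp{Q} & R \end{bmatrix}$ is positive definite over $\R$. Since it is real and symmetric, it is also positive definite as a Hermitian matrix acting on $\Com^{2n}$. For any nonzero $v\in\Com^n$ the companion vector $w$ above is nonzero, hence $\langle M(a)v,v\rangle = w^*\begin{bmatrix} P & Q \\ \trasp{Q} & R \end{bmatrix} w > 0$. Thus $M(a)$ is positive definite, in particular invertible, so $\det M(a)\neq 0$ for every $a\in\R$. By Lemma~\ref{lem:criteria_hypebolic} the matrix $JB$ is hyperbolic. The only potential obstacle is the bookkeeping of the block computation (and the routine observation that real positive definiteness implies Hermitian positive definiteness on the complexification); no analysis is required beyond that.
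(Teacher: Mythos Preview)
Your proof is correct and follows essentially the same route as the paper: both arguments express $a^2P+ia(\trasp{Q}-Q)+R$ as the Hermitian form of the block matrix $\begin{bmatrix}P&Q\\\trasp{Q}&R\end{bmatrix}$ evaluated on the vector $\trasp{[iav,\,v]}$, conclude positive definiteness, and then invoke Lemma~\ref{lem:criteria_hypebolic}. Your version is a bit more explicit about the Hermitian/complexification bookkeeping, but the content is identical.
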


\begin{proof}
For each $a\in\R$ consider the matrix
\[
a^2P + i a(Q^T-Q) + R.
\]
We can rewrite it as
\[
a^2P + i a(Q^T-Q) + R
=
\begin{pmatrix} i a\Id & \Id \end{pmatrix}
\begin{pmatrix}
P & Q\\[2pt]
Q^T & R
\end{pmatrix}
\begin{pmatrix}
- i a\Id\\[2pt]
\Id
\end{pmatrix}.
\]
Since the block matrix in the middle is positive definite by assumption, the above expression is
positive definite (and in particular invertible) for every $a\in\R$.  
The claim then follows from Lemma~\ref{lem:criteria_hypebolic}.
\end{proof}

We next consider a family of Hamiltonian matrices obtained by adding a scalar multiple of the identity
to the potential $R$. The following lemma shows that, under a natural positivity assumption on $P$,
hyperbolicity at $\lambda=0$ propagates to all $\lambda\ge 0$.

\begin{lem}\label{thm:hyperbolic-at-starting-point}
Let $[0,1]\ni\lambda\longmapsto R_\lambda:=R+\lambda\Id\in\Sym(n)$ and let $B_\lambda$ be obtained
from $B$ in \eqref{matrix:B} by replacing $R$ with $R_\lambda$.  
Assume that $P\in\Sym^+(n)$ and that $JB_0$ is hyperbolic.  
Then $JB_\lambda$ is hyperbolic for all $\lambda\in\R^+$.
\end{lem}

\begin{proof}
For each $\lambda\ge 0$ define
\[
f_\lambda(a)
=
a^2P + i a(Q^T-Q) + R + \lambda\Id,
\qquad a\in\R.
\]
By Lemma~\ref{lem:criteria_hypebolic}, the hyperbolicity of $JB_0$ is equivalent to
\[
\det f_0(a)\neq 0 \quad\text{for all } a\in\R.
\]

Since $P>0$, for $|a|$ large the leading term $a^2P$ dominates and all eigenvalues of
$f_0(a)$ are positive. Together with $\det f_0(a)\neq 0$ for every $a$, the continuity of
the eigenvalues implies that no eigenvalue of $f_0(a)$ can cross $0$ as $a$ varies, hence
$f_0(a)$ is positive definite for all $a\in\R$.

Now, for each $\lambda>0$ we have
\[
f_\lambda(a) = f_0(a) + \lambda\Id.
\]
Since $f_0(a)$ is positive definite, all eigenvalues of $f_\lambda(a)$ are strictly positive
for every $(\lambda,a)\in\R^+\times\R$, and in particular $f_\lambda(a)$ is invertible for all
$a\in\R$. Lemma~\ref{lem:criteria_hypebolic} then implies that $JB_\lambda$ is hyperbolic for
all $\lambda\ge 0$.
\end{proof}

\begin{cor}\label{cor:hyper_sI}
Let $R_\lambda = R + \lambda\Id$ and assume that condition \textnormal{(L2)} holds.
Then there exists a constant $\lambda_0>0$ (for instance
\[
\lambda_0 = \frac{8C_2^2}{C_1} + C_3
\]
as in Lemma~\ref{lem:matrix positive condition}) such that, for all $\lambda\ge\lambda_0$, the
matrix $JB_\lambda$ is hyperbolic.
\end{cor}

\begin{proof}
By Lemma~\ref{lem:matrix positive condition}, for $\lambda\ge\lambda_0$ the block matrix
\[
\begin{pmatrix}
P & Q\\[2pt]
Q^T & R_\lambda
\end{pmatrix}
\]
is positive definite.  
Hence, by Corollary~\ref{cor:hyper_positive_definite}, the corresponding matrix $JB_\lambda$ is
hyperbolic.
\end{proof}

A similar statement holds if we replace the path $\lambda\mapsto R+\lambda\Id$ with the path
$\lambda\mapsto R_\lambda:=\lambda P$, which will be useful later on.

\begin{cor}\label{cor:hyper_sP}
Assume that $P$ is invertible and let $R_\lambda:=\lambda P$.  
Then there exists $\widehat\lambda>0$ such that $JB_\lambda$ is hyperbolic for every $\lambda>\widehat\lambda$.
\end{cor}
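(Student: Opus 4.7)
The plan is to invoke Lemma~\ref{lem:criteria_hypebolic}, which reduces the hyperbolicity of $JB_\lambda$ to the invertibility, for every $a\in\R$, of the matrix
\[
g_\lambda(a)\= a^2 P + ia(\trasp{Q}-Q) + R_\lambda = (a^2+\lambda)P + ia(\trasp{Q}-Q),
\]
after substituting $R_\lambda = \lambda P$. I would first dispose of the easy case $a=0$: here $g_\lambda(0)=\lambda P$, which is invertible for every $\lambda>0$ since $P$ is invertible by hypothesis.

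For $a\neq 0$ I would factor
\[
g_\lambda(a) = aP\bigl[(a+\lambda/a)\Id + iP^{-1}(\trasp{Q}-Q)\bigr]
\]
and introduce the auxiliary real parameter $s\= a+\lambda/a$. The AM-GM inequality forces $|s|\geq 2\sqrt{\lambda}$ uniformly in $a\neq 0$, so invertibility of $g_\lambda(a)$ for every $a\neq 0$ will follow as soon as the pencil $s\Id + iP^{-1}(\trasp{Q}-Q)$ is invertible for all $|s|\geq 2\sqrt{\lambda}$. Rewriting this pencil as $s\bigl[\Id + (i/s)P^{-1}(\trasp{Q}-Q)\bigr]$, a standard Neumann-series argument yields invertibility whenever $|s|>\norm{P^{-1}(\trasp{Q}-Q)}$.

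Combining the two steps, it is enough to set $\widehat\lambda \= \norm{P^{-1}(\trasp{Q}-Q)}^{\,2}/4$: any $\lambda>\widehat\lambda$ then satisfies $2\sqrt{\lambda}>\norm{P^{-1}(\trasp{Q}-Q)}$, forcing $g_\lambda(a)\neq 0$ for every $a\in\R$, whence Lemma~\ref{lem:criteria_hypebolic} delivers the hyperbolicity of $JB_\lambda$.

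The only subtlety is that, in contrast to Corollary~\ref{cor:hyper_sI}, condition (F2) is not at our disposal and $P$ is only assumed invertible rather than positive definite; consequently the positive-definiteness shortcut through Corollary~\ref{cor:hyper_positive_definite} is closed. The change of parameter $s=a+\lambda/a$ is precisely the device that bypasses this obstacle, since it converts the two-sided question ``invertible for all real $a$'' into the one-sided norm estimate $|s|>\norm{P^{-1}(\trasp{Q}-Q)}$, which is then handled by an elementary perturbation argument.
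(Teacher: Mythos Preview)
Your proof is correct and follows essentially the same route as the paper: both invoke Lemma~\ref{lem:criteria_hypebolic}, factor $g_\lambda(a)$ through $P$, and use the AM--GM bound $|a|/(a^2+\lambda)\leq 1/(2\sqrt\lambda)$ together with a Neumann-series estimate to force invertibility for $\lambda$ large. The only cosmetic difference is that the paper factors out $(a^2+\lambda)P$ directly, writing $g_\lambda(a)=P(a^2+\lambda)\bigl[\Id+\tfrac{ia}{a^2+\lambda}P^{-1}(\trasp Q-Q)\bigr]$ and thereby avoiding the separate $a=0$ case, whereas you factor out $aP$ and substitute $s=a+\lambda/a$; both arrive at the same threshold $\widehat\lambda=\norm{P^{-1}(\trasp Q-Q)}^{2}/4$.
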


\begin{proof}
We compute
\begin{align*}
a^2P + i a(Q^T-Q) + R_\lambda
&=
(a^2+\lambda)P + i a(Q^T-Q)\\
&=
P(a^2+\lambda)\left(\Id + \frac{i a}{a^2+\lambda} P^{-1}(Q^T-Q)\right).
\end{align*}
For $\lambda>0$ we have
\[
\left|\frac{a}{a^2+\lambda}\right|
\le \frac{1}{2\sqrt\lambda},
\]
so there exists $\widehat\lambda>0$ such that, for every $\lambda>\widehat\lambda$,
\[
\left\|\frac{i a}{a^2+\lambda}P^{-1}(Q^T-Q)\right\| < 1
\quad\text{for all } a\in\R.
\]
Therefore the perturbation of the identity in the last factor is invertible, and so
$a^2P + i a(Q^T-Q) + R_\lambda$ is invertible for all $a\in\R$ and all $\lambda>\widehat\lambda$.
By Lemma~\ref{lem:criteria_hypebolic}, this implies that $JB_\lambda$ is hyperbolic for every
$\lambda>\widehat\lambda$.
\end{proof}

\medskip

The next transversality result describes the relative position of the spectral subspaces of a
hyperbolic Hamiltonian matrix with respect to the Dirichlet Lagrangian.

\begin{lem}\label{thm:transversal}
Assume that $JB_0$ is hyperbolic. Then, for all $\lambda\in\R^+$, the positive and negative spectral
subspaces of $JB_\lambda$ are both transversal to the horizontal (Dirichlet) Lagrangian, namely
\[
V^\pm(JB_\lambda)\cap L_D = \{0\}
\qquad\forall\,\lambda\in\R^+,
\]
where $L_D = \R^n\times\{0\}$.
\end{lem}

\begin{proof}
We give the argument for $V^{+}(JB_\lambda)$; the case of 
$V^{-}(JB_\lambda)$ is analogous.  
By Lemma~\ref{thm:hyperbolic-at-starting-point}, each matrix $JB_\lambda$ is 
hyperbolic for $\lambda\ge0$. Hence its positive and negative spectral 
subspaces $V^{\pm}(JB_\lambda)$ are Lagrangian in $(\R^{2n},\omega)$; 
see, for instance, \cite{HP17}.

Let $(u,0)^T\in V^+(JB_\lambda)\cap L_D$.  
Since $V^+(JB_\lambda)$ is invariant under $JB_\lambda$, we also have
$JB_\lambda(u,0)^T\in V^+(JB_\lambda)$.  
Using the symplectic form $\omega(z_1,z_2)=\langle J z_1,z_2\rangle$, we compute
\[
0
=
\omega\bigl(JB_\lambda(u,0)^T,(u,0)^T\bigr)
=
\bigl\langle J(JB_\lambda(u,0)^T),(u,0)^T\bigr\rangle
=
-\bigl\langle B_\lambda(u,0)^T,(u,0)^T\bigr\rangle.
\]
From the explicit expression of $B_\lambda$ we have
\[
B_\lambda(u,0)^T
=
\begin{pmatrix}
P^{-1}u\\
-Q^T P^{-1}u
\end{pmatrix},
\]
so that
\[
\bigl\langle B_\lambda(u,0)^T,(u,0)^T\bigr\rangle
=
\langle P^{-1}u,u\rangle.
\]
Hence
\[
0 = -\langle P^{-1}u,u\rangle.
\]
Since $P>0$, the matrix $P^{-1}$ is positive definite and therefore $\langle P^{-1}u,u\rangle=0$
implies $u=0$. Thus $(u,0)^T=0$ and
\[
V^+(JB_\lambda)\cap L_D = \{0\},
\]
as claimed.
\end{proof}

%
%
%
%

\section{First order differential operators and Fredholmness}\label{sec:fredholm-sturm}
 
In this section we collect the results about the Fredholmness of Sturm–Liouville operators both on
the line and on the half-line, as well as for the associated first order Hamiltonian operators, that
are needed in the construction of the index theory.

We will frequently pass from second order scalar equations to first order Hamiltonian systems and
back. In doing so, the same differential expression will appear with different domains (minimal,
maximal, and with boundary conditions). A recurrent technical point is to compare Fredholm
properties of such operators when their domains differ only by finite-dimensional subspaces. We
begin with a classical abstract result of this type.

\begin{lem}\label{lem:finite-codim-rge-closed}\cite[Theorem 2.4]{Kre82}
Let $X,\ Y$ be Banach spaces and let  $L:\dom L\subset X\to Y$ be a closed linear operator with
dense domain $\dom L$. Assume that there exists a closed subspace $V$ of $Y$ such that 
\[
\im L\oplus V=Y.
\]
Then $\im L$ is closed in $Y$. In particular, if $\codim \im L<+\infty$, then $\im L$ is closed in $Y$.
\end{lem}

The next lemma relates the Fredholmness of the Sturm–Liouville operator on the half-line for
different choices of domain (minimal, maximal, and with a selfadjoint boundary condition at the
initial instant).

\begin{lem}\label{thm:equivalence-SL}
With the above notation, the operator $\mathcal A^+_{L_0}$ is Fredholm if and only if the operators
$\mathcal A_m^+$ and $\mathcal A^+$ are Fredholm, where $L_0$ denotes the selfadjoint boundary
condition at the initial instant.
\end{lem}	

\begin{proof}
Recall that $\mathcal A^+$ is the maximal Sturm–Liouville operator on $W^{2,2}$, and that
$\mathcal A_m^+$ is its minimal realization on $W_0^{2,2}$. The operator $\mathcal A^+_{L_0}$ is the
realization with selfadjoint boundary condition $L_0$ at $t=0$, so we have continuous inclusions of
domains
\[
\dom \mathcal A_m^+\subset \dom \mathcal A^+_{L_0}\subset \dom \mathcal A^+,
\]
and therefore
\begin{equation}\label{eq:incl-kernel-image}
\ker\left(\mathcal  A^+_m\right)\subset \ker\left(\mathcal A^+_{L_0}\right)\subset \ker\left(\mathcal A^+_M\right), 
\qquad 
\im\left(\mathcal A^+_m\right)\subset \im\left(\mathcal A^+_{L_0}\right)\subset \im\left(\mathcal A^+_M\right),
\end{equation}
where $\mathcal A^+_M:=\mathcal A^+$ denotes the maximal operator.

Moreover, $\mathcal A^+$ and $\mathcal A_m^+$ are conjugated (in the $L^2$ sense) by a bounded
invertible operator (coming from the standard reduction to a first order Hamiltonian system). In
particular, $\mathcal A_m^+$ is Fredholm if and only if $\mathcal A^+$ is Fredholm, and in this case
they have the same index.

\medskip

\noindent$(\Leftarrow)$ Assume that $\mathcal A_m^+$ is Fredholm. Then $\ker\mathcal A_m^+$ and the
cokernel $L^2/\im\mathcal A_m^+$ are finite-dimensional, and $\im\mathcal A_m^+$ is closed. From
\eqref{eq:incl-kernel-image} we obtain
\[
\im(\mathcal A_m^+)\subset \im(\mathcal A^+_{L_0})\subset L^2,
\]
and hence
\[
\codim\bigl(\im \mathcal A^+_{L_0}\bigr)\le \codim\bigl(\im \mathcal A_m^+\bigr)<+\infty.
\]
Thus the cokernel of $\mathcal A^+_{L_0}$ is finite-dimensional. 

On the other hand, every element in $\ker\mathcal A^+_{L_0}$ (and in $\ker\mathcal A^+_M$) is a
solution of the associated second order ODE belonging to $W^{2,2}(\R^+)$, hence to $L^2(\R^+)$. 
Standard ODE theory implies that the space of such $L^2$-solutions has finite dimension (at most
$2n$ in our setting). Therefore
\[
\dim \ker\mathcal A^+_{L_0}\le \dim\ker\mathcal A^+_M<+\infty.
\]

We have shown that both the kernel and cokernel of $\mathcal A^+_{L_0}$ are finite-dimensional. To
conclude Fredholmness, it remains to prove that $\im\mathcal A^+_{L_0}$ is closed. Since
$\mathcal A^+_{L_0}$ is a closed operator and $\codim\im\mathcal A^+_{L_0}<+\infty$, we can apply
Lemma~\ref{lem:finite-codim-rge-closed} to obtain that $\im\mathcal A^+_{L_0}$ is closed in $L^2$.
Hence $\mathcal A^+_{L_0}$ is Fredholm.

\medskip

\noindent$(\Rightarrow)$ Conversely, assume that $\mathcal A^+_{L_0}$ is Fredholm. Then both its
kernel and cokernel are finite-dimensional and its range is closed. From
\eqref{eq:incl-kernel-image} we again obtain
\[
\im(\mathcal A_m^+)\subset \im(\mathcal A^+_{L_0}) \subset L^2.
\]
The quotient $\im(\mathcal A^+_{L_0})/\im(\mathcal A_m^+)$ is finite-dimensional, since passing from
$\dom\mathcal A_m^+$ to $\dom\mathcal A^+_{L_0}$ amounts to imposing a different (selfadjoint)
boundary condition at $t=0$, and the space of boundary values at $t=0$ is finite-dimensional.
Thus $\im(\mathcal A_m^+)$ has finite codimension in $L^2$ and is a finite-codimensional subspace of
the closed subspace $\im(\mathcal A^+_{L_0})$; by Lemma~\ref{lem:finite-codim-rge-closed} it follows
that $\im(\mathcal A_m^+)$ is closed. 

As above, the kernel of $\mathcal A_m^+$ consists of $L^2$-solutions of the associated second order
ODE satisfying stronger boundary conditions, and hence it is a subspace of $\ker\mathcal A^+_{L_0}$;
in particular, $\dim\ker\mathcal A_m^+<+\infty$. Therefore $\mathcal A_m^+$ is Fredholm. Since
$\mathcal A_m^+$ and $\mathcal A^+$ are conjugated, $\mathcal A^+$ is Fredholm as well.

This proves the claimed equivalence.
\end{proof}

Arguing in exactly the same way for the associated first order Hamiltonian realizations, we obtain:

\begin{lem}\label{lem:fredholm equvalent for f.l.o with dif. value}
The operator $\mathcal F^+_{L_0}$ is Fredholm if and only if the operator $\mathcal F_m^+$ is Fredholm.
\end{lem}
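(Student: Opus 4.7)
The plan is to transport the argument used for Lemma~\ref{thm:equivalence-SL} almost verbatim to the first-order setting. First I would introduce the maximal realization
\[
\mathscr F_M^+ \= -J\dfrac{d}{dt} - B(t): W^{1,2}(\R^+, \R^{2n}) \subset L^2(\R^+; \R^{2n}) \to L^2(\R^+; \R^{2n}),
\]
and record that, by the standard integration by parts identity on $\R^+$, the boundary term $\langle Jz(0),w(0)\rangle$ vanishes whenever $w\in W_0^{1,2}$, so that $\mathscr F_M^+$ is the $L^2$-adjoint of $\mathscr F_m^+$. Thus $\mathscr F_m^+$ is Fredholm if and only if $\mathscr F_M^+$ is Fredholm.

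Second, I exploit the chain of domains
\[
W_0^{1,2}(\R^+,\R^{2n}) \subset W_{L_0}^+ \subset W^{1,2}(\R^+,\R^{2n}).
\]
Since $L_0\in\Lagr(n)$ is $n$-dimensional inside $\R^{2n}$, the trace map $z\mapsto z(0)$ identifies the successive quotients with $L_0\cong\R^n$ and $\R^{2n}/L_0\cong\R^n$, so each inclusion has finite codimension. From the inclusions
\[
\ker(\mathscr F_m^+) \subset \ker(\mathscr F^+) \subset \ker(\mathscr F_M^+), \qquad \im(\mathscr F_m^+) \subset \im(\mathscr F^+) \subset \im(\mathscr F_M^+),
\]
I then argue in both directions. $(\Leftarrow)$ If $\mathscr F_m^+$ is Fredholm, so is $\mathscr F_M^+$; hence $\dim\ker\mathscr F^+\leq \dim\ker\mathscr F_M^+<+\infty$ and $\codim\im\mathscr F^+\leq \codim\im\mathscr F_m^+<+\infty$. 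Since $\mathscr F^+$ is closed and densely defined, Lemma~\ref{lem:finite-codim-rge-closed} applies and produces the closedness of $\im\mathscr F^+$, which upgrades the finite codimension to Fredholmness. $(\Rightarrow)$ The reverse implication runs symmetrically: from the same sandwich the kernel and cokernel of $\mathscr F_m^+$ differ from those of $\mathscr F^+$ only by subspaces of dimension at most $n$, and Lemma~\ref{lem:finite-codim-rge-closed} again yields closedness of $\im\mathscr F_m^+$.

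There is essentially no serious obstacle beyond verifying the adjointness relation between $\mathscr F_m^+$ and $\mathscr F_M^+$; the only subtle point is making sure that the boundary term in integration by parts is genuinely controlled by the vanishing trace on the minimal domain (which uses only that functions in $W_0^{1,2}(\R^+,\R^{2n})$ have vanishing trace at $t=0$ and decay at infinity). Everything else is a finite-dimensional bookkeeping argument on kernels and cokernels, identical in structure to the Sturm-Liouville case already treated in Lemma~\ref{thm:equivalence-SL}, with the half-line variant on $\R^-$ following by the same reasoning as noted in Remark~\ref{rem::utile}.
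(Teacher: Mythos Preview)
Your proposal is correct and matches the paper's approach exactly: the paper simply states ``By using the same arguments, the following result holds,'' referring back to the proof of Lemma~\ref{thm:equivalence-SL}, and your write-up is precisely that transposition to the first-order operator, with the adjointness $\mathscr F_M^+ = (\mathscr F_m^+)^*$ playing the role that $\mathscr A_M^+ = (\mathscr A_m^+)^*$ played before. The extra detail you supply on the finite codimension of the domain inclusions and the reverse implication is welcome but not a departure from the intended argument.
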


We now introduce a one-parameter family of first order Hamiltonian operators on the half-line that
will be used as an auxiliary tool in the comparison between the Sturm–Liouville and Hamiltonian
pictures. For each $s \in \R$, let 
\[
\mathscr F^+_{0,s}:L^2(\R^+,\R^{2n})\rightarrow L^2(\R^+,\R^{2n})
\]
be the first order differential operator defined by 
\[
\mathscr F^+_{0,s}=-J\dfrac{d }{d  t}-B_s(t)
\]
on the domain $W_0^{1,2}(\R^+,\R^{2n})$, where  
\[
B_s(t)=\begin{pmatrix}
P^{-1}(t) & -P^{-1}(t) Q(t)\\[4pt]
-Q^T(t)P^{-1}(t) & Q^T(t)P^{-1}(t) Q(t)-sP(t)
\end{pmatrix}.
\]
We denote by $B^+_{s}$ the uniform limit of $B_s(t)$ as $t \to+\infty$, whose existence is guaranteed
by our standing hypotheses.

\begin{lem}\label{lem:operator invertible}
The operator $\mathscr F^+_{0,s}$ is non-degenerate for every $s\in\R$, i.e.\ $\ker \mathscr F^+_{0,s}=\{0\}$.
\end{lem}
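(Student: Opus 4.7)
My plan is to reduce the statement to uniqueness for linear ordinary differential equations. Fix $s\in\R$ and suppose $z\in\ker\mathscr F^+_{0,s}$. By definition of the domain, $z\in W^{1,2}(\R^+,\R^{2n})$ with $z(0)=0$, and the equation $\mathscr F^+_{0,s}z=0$, after multiplying by $-J^{-1}=J$, rewrites as the Hamiltonian ODE
\[
\dot z(t)=JB_s(t)\,z(t)\qquad \text{a.e.\ on } \R^+.
\]

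I would next promote this almost-everywhere identity to a pointwise one. Assumption (F2) guarantees that $P(t)$ is uniformly positive definite, hence uniformly invertible, so $P^{-1}$ is bounded and continuous; combined with continuity of $Q$ and the fact that $sP$ replaces $R$ in the lower-right block, this gives that $B_s(\cdot)$ is continuous. Since one-dimensional Sobolev embedding makes $z$ continuous, the right-hand side $JB_s(t)z(t)$ is continuous, so $\dot z$ coincides almost everywhere with a continuous function and $z\in C^1(\R^+,\R^{2n})$ solves the Hamiltonian system in the classical sense.

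Finally, with $z(0)=0$ and $z$ a $C^1$ solution of a linear ODE with continuous coefficients, the classical uniqueness theorem forces $z\equiv0$. Therefore $\ker\mathscr F^+_{0,s}=\{0\}$, which is exactly the non-degeneracy claim.

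I do not expect any genuine obstacle: the whole content of the lemma is that the Dirichlet condition $z(0)=0$ at the finite endpoint is enough, via ODE uniqueness, to trivialize the kernel, regardless of what happens at $+\infty$. In particular the specific form of $B_s$ and the parameter $s\in\R$ play no role in the argument itself; they will only matter when the family $\{\mathscr F^+_{0,s}\}_{s\in\R}$ is subsequently used as a homotopy whose spectral flow one wants to identify as zero (the non-degeneracy established here being exactly what rules out eigenvalue crossings through $0$).
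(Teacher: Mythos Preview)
Your proof is correct and follows essentially the same approach as the paper's: identify $\ker\mathscr F^+_{0,s}$ with the set of $W^{1,2}$-solutions of $\dot z=JB_s(t)z$ satisfying $z(0)=0$, then invoke uniqueness for linear ODEs to conclude $z\equiv0$. You are somewhat more careful than the paper about the regularity step (promoting the a.e.\ identity to a classical one via continuity of $B_s$ and Sobolev embedding), but this is elaboration rather than a different idea.
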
	

\begin{proof}
Consider the associated Hamiltonian system
\begin{align}\label{eq:ode-only-0-solution}
\begin{cases}
\dot{z}(t)=JB_s(t)z(t),\qquad  t\in\R^+,\\[4pt]
z(0)=0.
\end{cases}
\end{align}
By definition, $\ker \mathscr F_{0,s}^+$ consists of all solutions $z\in W_0^{1,2}(\R^+,\R^{2n})$ of
\eqref{eq:ode-only-0-solution}. However, by the standard existence and uniqueness theorem for ODEs,
the only solution of \eqref{eq:ode-only-0-solution} with initial condition $z(0)=0$ is the trivial
solution $z\equiv 0$, for every fixed $s\in\R$. Hence $\ker \mathscr F^+_{0,s}=\{0\}$, and
$\mathscr F^+_{0,s}$ is non-degenerate for all $s\in\R$.
\end{proof}

\begin{lem}\label{lem:operator fredholm condition}
There exists $\widehat s\in\R$ such that $JB^+_s$ is hyperbolic for every  $s\ge \widehat s$.
\end{lem}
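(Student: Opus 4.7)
The plan is to reduce the statement to a direct application of Corollary~\ref{cor:hyper_sP}, applied to the limiting coefficient data at $+\infty$.

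First I would compute the limit matrix explicitly. By condition (F1), the pointwise limits $P_+=\lim_{t\to+\infty}P(t)$ and $Q_+=\lim_{t\to+\infty}Q(t)$ exist. Moreover, passing to the limit in the bound $\norm{P(t)v}\geq C_1|v|$ (valid for every $t\in\R$) yields $\norm{P_+v}\geq C_1|v|$, so $P_+$ is injective and therefore invertible. It follows that
\[
B^+_s=\lim_{t\to+\infty}B_s(t)=\begin{bmatrix} P_+^{-1} & -P_+^{-1}Q_+\\ -\trasp{Q_+}P_+^{-1} & \trasp{Q_+}P_+^{-1}Q_+-sP_+\end{bmatrix},
\]
and the convergence is uniform in $s$, since $s$ multiplies the $t$-independent factor $P(t)\to P_+$.

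Next I would observe that $B^+_s$ is precisely the matrix obtained from the construction in Equation~\eqref{matrix:B} by choosing the (constant) coefficients $P=P_+$, $Q=Q_+$, and by replacing $R$ with the one-parameter family $R_s\=sP_+$. This is exactly the setting of Corollary~\ref{cor:hyper_sP} (applied with $\lambda=s$). That corollary supplies a constant $\widehat s>0$, depending only on $P_+$ and $Q_+$, such that $JB^+_s$ is hyperbolic whenever $s>\widehat s$. Taking this $\widehat s$ (or any larger value) gives the conclusion.

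The argument is essentially bookkeeping: the only non-trivial ingredient is the invertibility of $P_+$, which is immediate from the coercivity bound in (F1), and everything else is an invocation of the already established Corollary~\ref{cor:hyper_sP}. Thus I do not anticipate any genuine obstacle; the main point is simply to recognize that the asymptotic matrix $B^+_s$ has exactly the structure to which Corollary~\ref{cor:hyper_sP} applies.
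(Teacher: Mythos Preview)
Your proposal is correct and follows exactly the paper's own approach: the paper's proof is a one-line invocation of Corollary~\ref{cor:hyper_sP}, and you have simply made explicit the verification that $P_+$ is invertible and that $B^+_s$ has the required form $R_s=sP_+$. There is nothing to add.
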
	

\begin{proof}
This is a direct consequence of Corollary~\ref{cor:hyper_sP}, which provides hyperbolicity of the
limiting Hamiltonian matrices for all sufficiently large $s$.
\end{proof} 

The next result gives a characterization of the Fredholmness of the minimal Sturm–Liouville
operator $\mathcal A_m^+$ in terms of the corresponding first order Hamiltonian operator
$\mathcal F_m^+$.

\begin{prop}\label{thm:fredholm-equivalent} 
The operator
\[
\mathcal A_m^+:W_0^{2,2}(\R^+;\R^{n})\subset L^2(\R^+;\R^{n})\to L^2(\R^+;\R^{n})
\]
is Fredholm if and only if the operator $\mathcal F_m^+$ is Fredholm. Moreover,
\[
\ind \mathcal A_m^+= \ind \mathcal F_m^+.
\]
\end{prop}

\begin{proof}
We start by observing that $\mathcal F_m^+$ and $\mathcal A_m^+$ are both symmetric operators and
that their adjoints are respectively the maximal operators $\mathcal F^+$ and $\mathcal A^+$. Hence
\[
\ker(\mathcal F^+)=\im (\mathcal F_m^+)^\perp ,
\qquad
\ker(\mathcal A^+)=\im (\mathcal A_m^+)^\perp.
\]
Moreover, it is well-known that 
\[
\dim\ker(\mathcal A^+)=\dim \ker(\mathcal F^+)\le 2n,
\qquad 
\dim\ker(\mathcal A_m^+)=\dim \ker(\mathcal F_m^+)=0,
\]
the last equalities following from the fact that $\mathcal A_m^+$ and $\mathcal F_m^+$ incorporate
homogeneous boundary conditions at $t=0$ and at $+\infty$. Thus, to conclude the equivalence of
Fredholmness it remains to show that
\begin{itemize}
\item $\im(\mathcal A_m^+)$ is closed  if and only if  $\im(\mathcal F_m^+)$ is closed.
\end{itemize}

To this end, let us consider the closed subspaces  
\[
H_1=\left\{\begin{pmatrix}v\\ 0\end{pmatrix}\,\middle|\,v\in L^2(\R^+,\R^n)\right\},
\qquad
H_2=\left\{\begin{pmatrix}0\\ u\end{pmatrix}\,\middle|\,u\in L^2(\R^+,\R^n)\right\},
\]
so that $L^2(\R^+,\R^{2n})=H_1\oplus H_2$.

\paragraph{First claim.} We show that
\[
\im (\mathcal F_m^+) \text{ closed }\quad \Rightarrow \quad \im (\mathcal A_m^+) \text{ closed.}
\]
A straightforward computation gives
\begin{align}\label{eq:equivalent-fredholm-2}
\mathcal F_m^+\begin{pmatrix}y\\ x\end{pmatrix} =\begin{pmatrix}0\\ h\end{pmatrix}
\quad\Longleftrightarrow\quad  
y=P\dot x +Qx
\text{\ and\ } \mathcal A^+_m x=h.
\end{align}
Thus, \eqref{eq:equivalent-fredholm-2} implies  
\begin{align}\label{eq:rge A=rge F 2}
h\in \im(\mathcal A_m^+)\Longleftrightarrow
\begin{pmatrix}0\\ h\end{pmatrix}\in \im(\mathcal F_m^+).
\end{align} 
In other words, the subspace $H_2\cap \im (\mathcal F_m^+)$ is naturally isomorphic to
$\im (\mathcal A^+_m)$ via the identification
\[
H_2\ni \begin{pmatrix}0\\ h\end{pmatrix} \longleftrightarrow h\in L^2(\R^+,\R^n).
\]
If $\im (\mathcal F_m^+)$ is closed in $L^2(\R^+,\R^{2n})$, then
$H_2\cap \im (\mathcal F_m^+)$ is closed in $H_2$, hence also in $L^2(\R^+,\R^{2n})$. By the above
identification, $\im (\mathcal A_m^+)$ is then closed in $L^2(\R^+,\R^n)$.

\paragraph{Second claim.} We now prove the converse implication
\[
\im (\mathcal A_m^+) \text{ closed } \quad \Rightarrow \quad\im (\mathcal F_m^+) \text{ closed.}
\]

Assume that $\im (\mathcal A_m^+)$ is closed. To conclude, it is enough to show that
$H_2\cap \im \mathcal F_m^+$ is closed in $L^2(\R^+,\R^{2n})$, since $\im\mathcal F_m^+$ is then a
finite-codimensional extension of this closed subspace (as we will see below).

Let $s>\widehat{s}$, where $\widehat{s}$ is given by Lemma~\ref{lem:operator fredholm condition}. By
Lemmas~\ref{lem:operator invertible} and~\ref{lem:operator fredholm condition}, the operator
$\mathcal F^+_{0,s}$ is Fredholm with trivial kernel, hence invertible onto its image. By the closed
graph theorem, the inverse 
\[
\left(\mathcal F^+_{0,s}\right)^{-1}:\im \mathcal F^+_{0,s}\to W_0^{1,2}(\R^+,\R^{2n})
\]
is bounded.

Let $f\in \im \mathcal F_{0,s}^+$. We compute
\begin{align}\label{eq:equivalent of closed operator}
f-\mathcal F_m^+ \left(\mathcal F_{0,s}^+\right)^{-1}f
&=\left(\mathcal F^+_{0,s}-\mathcal F_m^+\right)\left(\mathcal F_{0,s}^+\right)^{-1}f\\
&=
\begin{pmatrix}0 & 0\\[2pt]
0 & R_+-sP_+
\end{pmatrix}
\left(\mathcal F_{0,s}^+\right)^{-1}f \in  H_2.\nonumber
\end{align}
Define
\[
T=\Id-\mathcal F_m^+\left(\mathcal F_{0,s}^+\right)^{-1}:\im\mathcal F_{0,s}^+\to H_2.
\]
Then $T$ is continuous and by \eqref{eq:equivalent of closed operator} we have
\[
Tf \in \im \mathcal F_m^+\quad\Longleftrightarrow\quad f\in \im \mathcal F_m^+.
\]
Consequently,
\begin{equation}
\im \mathscr F^+_{0,s}\cap \im \mathscr F^+_m
=
T^{-1}\bigl(H_2\cap \im \mathscr F^+_m\bigr).
\end{equation}
Since $T$ is continuous, the set $H_2\cap \im \mathscr F^+_m$ is closed if and only if
$\im \mathscr F^+_{0,s}\cap \im \mathscr F^+_m$ is closed in $\im\mathscr F^+_{0,s}$.

On the other hand, $\im \mathcal F^+_m$ is a finite-codimensional extension of
$\im \mathcal F^+_{0,s}\cap \im \mathcal F^+_m$ in $\im \mathcal F^+_m$. Indeed, writing
$X= L^2(\R^+,\R^{2n})$, we have
\begin{equation}
\im \mathcal F^+_m/(\im \mathcal F^+_{0,s}\cap \im \mathcal F^+_m)\cong (\im \mathcal F^+_{0,s}+ \im \mathcal F^+_m)/\im \mathcal F^+_{0,s},
\end{equation}	
and the latter quotient has dimension
\[
\dim \bigl((\im \mathcal F^+_{0,s}+ \im \mathcal F^+_m)/\im \mathcal F^+_{0,s}\bigr)
\le \codim \im \mathcal F_{0,s}^+<+\infty,
\]
because $\mathcal F^+_{0,s}$ is Fredholm. Hence
\[
\dim\bigl(\im \mathcal F^+_m/(\im \mathcal F^+_{0,s}\cap \im \mathcal F^+_m)\bigr)<\infty.
\]
Therefore $\im \mathcal F^+_m$ is the direct sum of the closed subspace
$\im \mathcal F^+_{0,s}\cap \im \mathcal F^+_m$ and a finite-dimensional space, and thus is closed.

Combining the two claims, we obtain the equivalence of the Fredholm property for $\mathcal A_m^+$
and $\mathcal F_m^+$. The identity of the indices follows from the kernel and cokernel
identifications above.
\end{proof} 
		
We now turn to constant-coefficient Hamiltonian operators. Let
\[
B(+\infty)=\begin{pmatrix}
P^{-1}_+&-P^{-1}_+Q_+\\[4pt]
-Q^T_+P^{-1}_+&Q^T_+P^{-1}_+Q_+-R_+
\end{pmatrix},
\]
where $P_+, Q_+, R_+$ are the matrices appearing in condition \textnormal{(H2)}. We define the
operators 
\begin{align}\label{eq:constant hamiltonian operator+}
\mathcal F_{L_0}^{+\infty}
&=
-J\frac{d }{d  t}-B(+\infty):W^+_{L_0}(\R^+,\R^{2n})\subset L^2\left(\R^+,\R^{2n}\right)\to L^2\left(\R^+,\R^{2n}\right),\nonumber \\
\mathcal F_m^{+\infty}
&=
-J\frac{d }{d  t}-B(+\infty): W_0^{1,2}(\R^+,\R^{2n})\subset L^2\left(\R^+,\R^{2n}\right)\to L^2\left(\R^+,\R^{2n}\right),\nonumber \\
\mathcal F^{+\infty}
&=
-J\frac{d }{d  t}-B(+\infty):W^{1,2}(\R^+,\R^{2n}) \subset L^2\left(\R^+,\R^{2n}\right)\to L^2\left(\R^+,\R^{2n}\right).
\end{align}	

The next lemma shows that the variable-coefficient Hamiltonian operator $\mathcal F^+$ is a
relatively compact perturbation of its constant-coefficient limit $\mathcal F_{L_0}^{+\infty}$.

\begin{lem}\label{lem:relative compact perturbation}
The operator
\[
\mathcal F^+=-J\frac{d }{d  t}-B(t):W^+_{L_0}(\R^+,\R^{2n})\subset L^2\left(\R^+,\R^{2n}\right)\to L^2\left(\R^+,\R^{2n}\right)
\] 
is a relatively compact perturbation of the operator $\mathcal F_{L_0}^{+\infty}$ given in
\eqref{eq:constant hamiltonian operator+}.
\end{lem}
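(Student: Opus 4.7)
The plan is to compute the difference $\widehat{\mathscr F^+}-\mathscr F^+$ explicitly and recognise it as multiplication by a matrix-valued function that decays at infinity, and then to show that this multiplication operator is compact from the graph-norm domain of $\widehat{\mathscr F^+}$ into $L^2(\R^+,\R^{2n})$. By definition one has
\[
(\widehat{\mathscr F^+}-\mathscr F^+)z(t)=\bigl(B(t)-B_+\bigr)z(t)\eq V(t)z(t),
\]
so both operators share the same first-order part and differ only by the zero-order multiplication operator $M_V$. Condition (F1) says that the entries of $P(t),Q(t),R(t)$ converge to those of $P_+,Q_+,R_+$ as $t\to+\infty$; together with the uniform invertibility of $P$ given by the lower bound $\|P(t)v\|\ge C_1|v|$ in (F1), this yields $B(t)\to B_+$ in operator norm, and the bounds in (F1) give $V\in L^\infty(\R^+,\Mat(2n,\R))$.

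Given this reduction, I would prove compactness of $M_V\colon\dom(\widehat{\mathscr F^+})\to L^2$ by the standard Rellich-plus-tail argument. Take a sequence $\{z_n\}$ bounded in the graph norm of $\widehat{\mathscr F^+}$; since $\dom(\widehat{\mathscr F^+})\subset W^{1,2}(\R^+,\R^{2n})$ and the latter embeds continuously into $L^\infty(\R^+,\R^{2n})$, the sequence $\{z_n\}$ is uniformly bounded both in $W^{1,2}$ and in $L^\infty$. For each fixed $T>0$, the Rellich--Kondrachov theorem yields compactness of $W^{1,2}([0,T],\R^{2n})\hookrightarrow L^2([0,T],\R^{2n})$, so a diagonal extraction produces a subsequence $\{z_{n_k}\}$ that is Cauchy in $L^2_{\mathrm{loc}}(\R^+)$. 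On a bounded interval one estimates
\[
\|V(z_{n_k}-z_{n_l})\|_{L^2([0,T])}\le\|V\|_\infty\,\|z_{n_k}-z_{n_l}\|_{L^2([0,T])}\longrightarrow 0,
\]
and on the tail one writes
\[
\|V(z_{n_k}-z_{n_l})\|_{L^2([T,+\infty))}\le\Bigl(\sup_{t\ge T}\|V(t)\|\Bigr)\bigl(\|z_{n_k}\|_{L^2}+\|z_{n_l}\|_{L^2}\bigr).
\]
Combining the two bounds shows that $\{M_V z_{n_k}\}$ is Cauchy, hence convergent, in $L^2(\R^+,\R^{2n})$. This is exactly the meaning of $\mathscr F^+$ being a relatively compact perturbation of $\widehat{\mathscr F^+}$.

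The main obstacle is the uniform decay $\sup_{t\ge T}\|V(t)\|\to 0$ as $T\to+\infty$, which is slightly stronger than the literal pointwise convergence asserted in (F1). I would justify it in either of two ways: first, by adopting the uniform-limit convention already used in (H1$'$) for the same kind of asymptotic matrix data; or, failing that, by exploiting the uniform $L^\infty$-bound on $\{z_n\}$ and writing $\int_T^{+\infty}|V(t)z_n(t)|^2\,dt\le\|z_n\|_\infty^2\int_T^{+\infty}\|V(t)\|^2\,dt$ and applying dominated convergence to $\|V(t)\|^2$ with majorant $(\|B\|_\infty+\|B_+\|)^2\chi_{[T,+\infty)}$ after first shrinking the support. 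Once this uniform tail control is in place, the rest of the argument is routine functional analysis.
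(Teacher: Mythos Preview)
Your argument is correct and follows essentially the same strategy as the paper: both identify the difference $\widehat{\mathscr F^+}-\mathscr F^+$ as multiplication by $V(t)=B(t)-B_+$, use Rellich--Kondrachov on bounded intervals, and control the tail via the decay of $V$. The paper packages this as an operator-norm approximation by compacts, writing $L_\lambda=(\mathscr F^+-\widehat{\mathscr F^+})(\widehat{\mathscr F^+}-\lambda\Id)^{-1}$ and approximating it by $L_{k,\lambda}$ built from smooth cutoffs $\chi_k$ supported in $[0,k]$; you instead run a direct sequential/diagonal argument on a bounded sequence in the graph norm. These are two standard presentations of the same compactness mechanism.

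Your ``main obstacle'' is not an obstacle at all, and you should drop the hedging paragraph. The statement $\lim_{t\to+\infty}B(t)=B_+$ in (F1) \emph{means} precisely that for every $\varepsilon>0$ there is $T$ with $\|B(t)-B_+\|<\varepsilon$ for all $t\ge T$; equivalently, $\sup_{t\ge T}\|V(t)\|\to 0$ as $T\to\infty$. There is no gap between the ``pointwise limit'' in (F1) and the uniform tail decay you need: for a single matrix-valued function of $t$, these are the same assertion. (The word ``pointwise'' in (F1) refers to the limits existing as ordinary limits of matrices, not to a weaker mode of convergence.) The paper uses exactly this fact when it asserts that $(B_+-B(t))(\chi_k(t)-1)\to 0$ uniformly in $t$. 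Your alternative route via dominated convergence is unnecessary and, as written, would not work without further hypotheses, since nothing guarantees $\|V\|^2\in L^1(\R^+)$.
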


\begin{proof} 
Fix $\lambda$ in the resolvent set of $\mathcal F_{L_0}^{+\infty}$. We need to show that the operator 
\[
L_{\lambda}= \bigl(\mathcal F_{L_0}^+-\mathcal F_{L_0}^{+\infty}\bigr)\circ\bigl(\mathcal F_{L_0}^{+\infty}-\lambda\Id\bigr)^{-1}:L^2\left(\R^+,\R^{2n}\right)\to L^2\left(\R^+,\R^{2n}\right)
\] 
is compact.

Let $\{\chi_k\}_{k \in \N}$ be a sequence of $\mathscr C^\infty$ cut-off functions on $\R^+$ such
that $\sup_{t\in\R^+}|\chi_k(t)|\le 1$,  $\sup_{t\in\R^+}|\chi'_k(t)|\le 1$ and 
\[
\chi_k(t)=\begin{cases}
1 & \textrm{ if }t\in[0,k-1],\\[4pt]
0 & \textrm{ if }t\in[k,+\infty).
\end{cases}
\]
Define the bounded multiplication operator 
\[
\alpha_k:L^2(\R^+,\R^{2n})\to L^2(\R^+,\R^{2n}),\qquad 
\alpha_k(x)(t)=\chi_k(t)\,x(t),
\]
and consider
\[
L_{k,\lambda}= \bigl(\mathcal F_{L_0}^+-\mathcal F_{L_0}^{+\infty}\bigr)\circ\alpha_k\circ\bigl(\mathcal F_{L_0}^{+\infty}-\lambda\Id\bigr)^{-1}:L^2\left(\R^+,\R^{2n}\right)\to L^2\left(\R^+,\R^{2n}\right).
\] 
We have
\[
\Bigl[\bigl(\mathcal F_{L_0}^+-\mathcal F_{L_0}^{+\infty}\bigr)(\alpha_k-\Id)\, x \Bigr](t)
=
\bigl(B(+\infty) -B(t)\bigr)\bigl(\chi_k(t)-1\bigr)x(t),
\] 
and by the assumption that $B(t)\to B(+\infty)$ uniformly as $t\to+\infty$ we obtain
\[
\lim_{k\to \infty}\sup_{t\in\R^+}\|B(+\infty)-B(t)\|\cdot|\chi_k(t)-1|=0.
\]
Thus $\bigl(\mathcal F_{L_0}^+-\mathcal F_{L_0}^{+\infty}\bigr)(\alpha_k-\Id)$ converges to $0$ in the
operator norm as $k\to+\infty$, and hence $L_{k,\lambda}\to L_\lambda$ in operator norm. Since the
set of compact operators is norm-closed, it suffices to show that each $L_{k,\lambda}$ is compact.

Now
\[
\bigl(\mathcal F_{L_0}^{+\infty}-\lambda\Id\bigr)^{-1}:L^2\left(\R^+,\R^{2n}\right)\to   W^{1,2}(\R^+,\R^{2n})
\]
is bounded. By construction, $\chi_k(t)=0$ for all $t\ge k$ and
$\sup_{t\in\R^+}|\chi_k(t)|\le 1$, so that 
\[
\alpha_k :W^{1,2}(\R^+,\R^{2n})\to W^{1,2}([0,k],\R^{2n})
\]
is bounded. By the compact Sobolev embedding, $W^{1,2}([0,k],\R^{2n})$ is compactly embedded in
$L^2([0,k],\R^{2n})$. Thus
\[
\alpha_k \circ \bigl(\mathcal F_{L_0}^{+\infty}-\lambda\Id\bigr)^{-1}:L^2(\R^+,\R^{2n})\to L^2(\R^+,\R^{2n})
\]
is a compact operator. Since $\mathcal F_{L_0}^+-\mathcal F_{L_0}^{+\infty}$ is bounded on $L^2$,
the composition $L_{k,\lambda}$ is compact. This proves that $L_\lambda$ is compact, and hence that
$\mathcal F^+$ is a relatively compact perturbation of $\mathcal F_{L_0}^{+\infty}$.
\end{proof}

We now characterize the Fredholmness of the constant-coefficient operator $\mathcal F^{+\infty}$ in
terms of the spectral properties of the matrix $JB(+\infty)$.

\begin{lem}\label{lem:constant fredholm iif hyperbolic}
The operator $\mathcal F^{+\infty}$ defined in \eqref{eq:constant hamiltonian operator+} is Fredholm
if and only if the matrix $JB(+\infty)$ is hyperbolic. In this case, its Fredholm index equals the
dimension of the negative spectral space of $JB(+\infty)$, namely
\[
\ind \mathcal F^{+\infty}=\dim V^-(JB(+\infty)).
\]
\end{lem}

\begin{proof}
By  \cite[Theorem 2.3]{RS05b}, the operator
\[
\mathcal G^{+\infty}=\dfrac{d }{d  t}-JB(+\infty),
\]
defined on $W^{1,2}\left(\R^+,\R^{2n}\right)$, is Fredholm if and only if $JB(+\infty)$ is
hyperbolic. Moreover,
\[
\ind \mathcal G^{+\infty}=\dim V^-(JB(+\infty)).
\]

It is straightforward to check that $\mathcal F^{+\infty}=-J\mathcal G^{+\infty}$, so that
\[
\im \mathcal F^{+\infty}=-J\im\mathcal G^{+\infty}.
\]
Since $-J$ is an isomorphism of $L^2(\R^+,\R^{2n})$, it follows that 
\[
\codim \im \mathcal F^{+\infty}=\codim \im \mathcal G^{+\infty},
\qquad
\ker\mathcal F^{+\infty}=\ker\mathcal G^{+\infty}.
\]
Thus $\mathcal F^{+\infty}$ is Fredholm if and only if $\mathcal G^{+\infty}$ is Fredholm, and in
this case
\[
\ind \mathcal F^{+\infty}=\ind \mathcal G^{+\infty}=\dim V^-(JB(+\infty)).
\]
The closedness of the ranges of these operators under the above hypotheses follows from
Lemma~\ref{lem:finite-codim-rge-closed} and \cite[Lemma 2.1]{RS05b}.
\end{proof} 
 
\begin{cor}\label{lem:min s.l.o. fredhom iff hyperbolic}
The operator $\mathcal F_m^{+\infty}$ defined in \eqref{eq:constant hamiltonian operator+} is
Fredholm if and only if the matrix $JB(+\infty)$ is hyperbolic. 
\end{cor}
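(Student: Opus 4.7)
The plan is to reduce the statement for the minimal operator $\widehat{\mathscr F_m^+}$ to the already-established Lemma~\ref{lem:constant fredholm iif hyperbolic} for the maximal one $\widehat{\mathscr F_M^+}$, using duality: since the two realisations of $-J\,d/dt-B_+$ differ only by a boundary condition at $0$, they are adjoint to one another in $L^2(\R^+;\R^{2n})$, and a closed densely defined operator on a Hilbert space is Fredholm exactly when its adjoint is.

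The first step I would carry out is the verification that $(\widehat{\mathscr F_m^+})^{*}=\widehat{\mathscr F_M^+}$. For $z\in W_0^{1,2}(\R^+;\R^{2n})$ and $w\in W^{1,2}(\R^+;\R^{2n})$, integration by parts gives
\[
(\widehat{\mathscr F_m^+}z,w)=(z,\widehat{\mathscr F_M^+}w)+\langle -Jz(0),w(0)\rangle=(z,\widehat{\mathscr F_M^+}w),
\]
because $z(0)=0$; hence $W^{1,2}(\R^+;\R^{2n})$ is contained in the adjoint domain. Conversely, if $w\in L^2$ satisfies $(\widehat{\mathscr F_m^+}z,w)=(z,g)$ for some $g\in L^2$ and every $z\in W_0^{1,2}$, then $Jw$ has an $L^2$ distributional derivative, so $w\in W^{1,2}(\R^+;\R^{2n})$. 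This is an adaptation of the computation already carried out for the variable-coefficient operator in the proof of Proposition~\ref{thm:fredholm-equivalent}.

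The second step is to combine this adjoint identity with the abstract fact that $T$ is Fredholm if and only if $T^{*}$ is (with $\ind T^{*}=-\ind T$ and $\mathrm{coker}\,T\cong\ker T^{*}$). This yields the equivalence
\[
\widehat{\mathscr F_m^+}\ \text{is Fredholm}\iff\widehat{\mathscr F_M^+}\ \text{is Fredholm},
\]
and then Lemma~\ref{lem:constant fredholm iif hyperbolic} translates the right-hand side into the hyperbolicity of $JB_+$.

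As a consistency check, I would verify by a direct computation that $\ker\widehat{\mathscr F_m^+}=\{0\}$ by ODE uniqueness (any element solves $\dot z=JB_+ z$ with $z(0)=0$), and that when $JB_+$ is hyperbolic the cokernel, being identified with $\ker\widehat{\mathscr F_M^+}$, coincides with the stable subspace $V^{-}(JB_+)$; closedness of the range then also follows from Lemma~\ref{lem:finite-codim-rge-closed} and one finds $\ind\widehat{\mathscr F_m^+}=-\dim V^{-}(JB_+)$. I do not foresee any substantive obstacle beyond the careful identification of the adjoint domain—pinning it down to be exactly $W^{1,2}$ and not strictly larger is the only subtle point—and the subsequent appeal to the preceding lemma.
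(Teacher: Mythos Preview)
Your proposal is correct and follows essentially the same route as the paper: the paper simply remarks that $\widehat{\mathscr F_m^+}$ and $\widehat{\mathscr F_M^+}$ are mutually adjoint (``conjugated'') and then invokes Lemma~\ref{lem:constant fredholm iif hyperbolic}. You have merely spelled out in more detail the adjoint identification and the abstract fact that a closed densely defined operator is Fredholm iff its adjoint is, which the paper leaves implicit.
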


\begin{proof}
The operators $\mathcal F_m^{+\infty}$ and $\mathcal F^{+\infty}$ are conjugated by a bounded
invertible operator (corresponding to the choice of boundary condition at the origin). Hence they
are simultaneously Fredholm, with the same index. The claim is therefore an immediate consequence of
Lemma~\ref{lem:constant fredholm iif hyperbolic}.
\end{proof}

\vspace{0.5cm}

\noindent
\begin{minipage}[t]{0.47\linewidth}
\textsc{Xijun Hu}\\
School of Mathematics\\
Shandong University\\
State Key Laboratory of Cryptography\\
and Digital Economy Security\\
Jinan, Shandong 250100\\
People's Republic of China\\
E-mail: \texttt{xjhu@sdu.edu.cn}

\vspace{0.6cm}

\textsc{Li Wu}\\
School of Mathematics\\
Shandong University\\
Jinan, Shandong 250100\\
People's Republic of China\\
E-mail: \texttt{201790000005@sdu.edu.cn}
\end{minipage}
\hfill
\begin{minipage}[t]{0.47\linewidth}
\textsc{Alessandro Portaluri}\\
Università degli Studi di Torino (DISAFA)\\
Largo Paolo Braccini 2\\
10095 Grugliasco (TO), Italy\\
Website: \url{https://portalurialessandro.wordpress.com}\\
E-mail: \texttt{alessandro.portaluri@unito.it}\\[2mm]
Visiting Professor of Mathematics\\
NYU Abu Dhabi, UAE\\
E-mail: \texttt{ap9453@nyu.edu}

\vspace{0.6cm}

\textsc{Qin Xing}\\
School of Mathematics and Statistics\\
Linyi University\\
Linyi, Shandong 276000\\
People's Republic of China\\
E-mail: \texttt{xingqin@lyu.edu.cn}
\end{minipage}

\vspace{1cm}

\end{document}